\documentclass[12pt,reqno]{amsart}
\usepackage[top=2cm,bottom=2cm,right=2.5cm,left=2.5cm]{geometry}
\usepackage{amssymb}
\usepackage{amsmath, amsthm, amscd, amsfonts, amssymb, graphicx, color}
\usepackage[bookmarksnumbered, colorlinks, plainpages]{hyperref}
\hypersetup{colorlinks=true,linkcolor=red, anchorcolor=green, citecolor=cyan, urlcolor=red, filecolor=magenta, pdftoolbar=true}
 
\usepackage{hyperref}

\textheight 22.5truecm \textwidth 14.5truecm
\setlength{\oddsidemargin}{0.35in}\setlength{\evensidemargin}{0.35in}

\setlength{\topmargin}{-.5cm}

\newtheorem{theorem}{Theorem}[section]
\newtheorem{lemma}[theorem]{Lemma}

\newtheorem{corollary}[theorem]{Corollary}
\theoremstyle{definition}
\newtheorem{definition}[theorem]{Definition}
\newtheorem{example}[theorem]{Example}

\theoremstyle{remark}
\newtheorem{remark}[theorem]{Remark}

\numberwithin{equation}{section}

\begin{document}

\setcounter{page}{1}

\title{$\varphi-$fixed point theorems in $C^{\ast}-$algebra valued partial metric spaces}

	\author{Hafida Massit$^{1*}$ and Mohamed Rossafi$^{2}$}

\address{$^{1}$Department of Mathematics, University of Ibn Tofail, B.P. 133, Kenitra, Morocco}
\email{\textcolor[rgb]{0.00,0.00,0.84}{massithafida@yahoo.fr}}
\address{$^{2}$LaSMA Laboratory Department of Mathematics Faculty of Sciences, Dhar El Mahraz University Sidi Mohamed Ben Abdellah, B. P. 1796 Fes Atlas, Morocco}
\email{\textcolor[rgb]{0.00,0.00,0.84}{rossafimohamed@gmail.com; mohamed.rossafi@usmba.ac.ma}}

\subjclass[2010]{Primary 47H10; Secondary 54H25.}

\keywords{$\varphi-$fixed point, $C^{\ast}$-algebra valued metric space, $C^{\ast}$-algebra valued partial metric space.}

\date{
\newline \indent $^{*}$Corresponding author}

\begin{abstract}
In this present article, we etablish some existence results of $\varphi-$fixed point of a mapping in a $C^{\ast}$-algebra valued metric spaces and we deduce some fixed point  theorems in $C^{\ast}$-algebra valued partial metric spaces. Non-trivial examples are further provided to support the hypotheses of our results.
\end{abstract}
\maketitle
\section{Introduction }
A $C^{\ast}$-algebra valued  metric spaces were introduced by Ma et al. \cite{Z.Ma} as a generalization of metric spaces they proved certain fixed point theorems, by giving the definition of $C^{\ast}$-algebra valued contractive mapping analogous to Banach contraction principle \cite{BAN}.

In this paper, inspired by the work done in \cite{Jeli}, we introduce the notion of $ \varphi -$fixed point results for various classes of operators defined on a $C^{\ast}$-algebra valued metric space $ (X,\mathbb{A},d) $. The obtained results are used to give some fixed point theorems, where $ X $ is endowed with a partial metric $ p $ . 
\section{preliminaries}
 Throughout this paper, we denote $  \mathbb{A}$ by an unital (i.e, unity element I) $C^{\ast}$-algebra with linear involution $\ast $, such that for all $ x,y \in \mathbb{A}  $ ,
\begin{center}
$ (xy)^{\ast} = y^{\ast}x^{\ast}$, and $ x^{\ast \ast} = x $.
\end{center}
We call an element $x\in\mathbb{A}$ a positive element, denoted by $x\succeq\theta$
 if $x\in\mathbb{A}_{h}=\lbrace x\in \mathbb{A}: x= x^{\ast}\rbrace $ and  $\sigma(x) \subset \mathbb{R}_{+}$, where  $\sigma(x)$  is the spectrum of $ x $. Using positive element, we can define a partial ordering $\preceq$ on $\mathbb{A}_{h}$ as follows : 
\begin{center}
$x\preceq y$  if and only if  $y-x\succeq\theta$
\end{center}  where $\theta$ means the zero element in $\mathbb{A}$. We denote the set ${x \in \mathbb{A} :x\succeq\theta}$ by $\mathbb{A}_{+ }$  and $\vert x \vert = (x^{\ast}x)^{\frac{1}{2}}$
\begin{remark}
When $  \mathbb{A}$ is a unital $C^{\ast}$-algebra,then for any $x\in \mathbb{A}_{+}$ we have \begin{center}
 $x\preceq I \Longleftrightarrow \Vert x \Vert \leq 1$
\end{center}
\end{remark}
\begin{definition} \cite{S.G}
 Let $X$ be a non-empty set. A mapping $\ p:X\times X\rightarrow \mathbb{A}$ is called a $C^{\ast}$-algebra valued metric on $ X $ if the following conditions are satisfied: 
 \begin{itemize}
 	\item[(i)]  $\theta \preceq p(x,y)$  for all $ x,y \in X $ and $ p(x,x) = p(y,y) = p(x,y)$ if and only if $x=y$ 
\item[(ii)]  $p(x,y)=p(y,x)$  for all $x,y\in X$. 
\item[(iii)] $ p(x,x) \preceq p(x,y) $ for all $x,y\in X$.
\item[(iv)]  $p(x,y)\preceq p(x,z)+p(z,y) -p(z,z)$  for all $ x,y,z \in X $.
\end{itemize}
Then $\left( X ,\mathbb{A}_{+} ,p\right) $ is called a $C^{\ast}$-algebra valued partial metric space.
\end{definition}
If we take $ \mathbb{A}= \mathbb{R} $, then the new notion of $C^{\ast}$-algebra valued partial metric space becomes equivalent to the definition of the real partial metric space.
	\begin{example} Let$ X=[0,1] $ and  $ x\in \mathbb{A} $ be a nonzero element.
		Define $ p(s,t)= max\{1+s,1+t\}xx^{\ast}$.
	Then we can easily show that $ p:X\times X \rightarrow \mathbb{A}$ is a $C^{\ast}$-algebra valued partial metric.
\end{example}
\begin{example} Let $ X=[0,1] $ and $ \mathbb{A}= \mathbb{R}^{2} $
	with a usual norm, be a real Banach space.
	Let $ p: X \times X \rightarrow \mathbb{R}^{2} $ be given as follows: 
	\begin{center}
		$ p(x,y)=( \vert x-y\vert ,\vert x-y \vert ) $
	\end{center}
	Then $ (X,\mathbb{R}^{2},p ) $ is a complete $C^{\ast}$-algebra valued partial metric.
\end{example}
\begin{definition} \cite{SCD}
 Let $\left( X ,\mathbb{A} ,p\right) $  be a  $C^{\ast}$-algebra valued partial metric space.
 \begin{itemize}
 	\item[(1)] A sequence $ \left\lbrace x_{n}\right\rbrace\subset X $ converges to $x \in X $ whenever for every $\varepsilon> 0$ there is a natural number $N$ such that for all $n>N$ ,
\begin{center}
$\Vert p(x_{n},x)+p (x,x)\Vert \leq \varepsilon$
\end{center} 
 We denote it by
 \begin{center}
	$lim_{n\longrightarrow \infty} p(x_{n}, x)-p(x,x)= \theta$.
\end{center}
\item[(2)]$ \{x_{n}\} $ is a partial Cauchy sequence respect to $ \mathbb{A} $, whenever $\varepsilon> 0$ there is a natural number $N$ such that 
\begin{center}
	$ (p(x_{n},x_{m}) -\dfrac{1}{2} p(x_{n},x_{n}) - \dfrac{1}{2} p(x_{m},x_{m})) ((p(x_{n},x_{m}) -\dfrac{1}{2} p(x_{n},x_{n}) - \dfrac{1}{2} p(x_{m},x_{m}))^{\ast}  \preceq \varepsilon^{2}$.
\end{center}
for all $n,m > N$
\item[(3]) $\left( X ,\mathbb{A}_{+} ,p\right) $ is said to be complete  with respect to $\mathbb{A}$  if every partial Cauchy sequence with respect to $\mathbb{A}$  converges to a point $ x $ in $ X $ such that
\begin{center}
	$ lim_{n \rightarrow \infty}(p(x_{n},x) -\dfrac{1}{2} p(x_{n},x_{n}) -\dfrac{1}{2}p(x,x))= \theta $.
\end{center}
From given $ C^{\ast} -$algebra-valued partial metric, we can obtain a $ C^{\ast} -$ algebra-valued metric. 

Put 
\begin{center}
	$ p^{s}(x,y)= 2p(x,y)-p(x,x) -p(y,y)$
\end{center}
\end{itemize}
 Then $ p^{s } $ is a $ C^{\ast}- $ algebra -valued metric.
\end{definition}
\begin{lemma} \cite{SCD}
	Let $\left( X ,\mathbb{A} ,p\right) $  be a $ C^{\ast} -$ algebra- valued partial metric space.
	\begin{itemize}
			\item[(1)] $ \{x_{n}\} $ is a partial Cauchy sequence in  $\left( X ,\mathbb{A} ,p\right) $ if and only if it is 
 Cauchy in the $ C^{\ast} -$ algebra -valued metric  $\left( X ,\mathbb{A} ,p^{s}\right) $.
\item[(2)]A  $ C^{\ast} -$ algebra- valued partial metric space  $\left( X ,\mathbb{A} ,p\right) $ is complete if and only if  $ C^{\ast} -$ algebra- valued metric space   $\left( X ,\mathbb{A} ,p^{s}\right) $ is complete. Furthermore,
\begin{center}
	$ lim_{n\rightarrow \infty}p^{s}(x_{n},x) = \theta \Leftrightarrow lim_{n \rightarrow \infty}(2p(x_{n},x)-p(x_{n},x_{n})-p(x,x)) =\theta $
\end{center}
or
 \begin{center}
	$  lim_{n\rightarrow \infty}p^{s}(x_{n},x) = \theta \Leftrightarrow lim_{n \rightarrow \infty}(p(x_{n},x)-p(x_{n},x_{n})) =\theta$  
	
	 and   $lim_{n \rightarrow \infty}(p(x_{n},x)-p(x,x)) =\theta $
\end{center}
\end{itemize}
\end{lemma}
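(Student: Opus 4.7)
The plan is to reduce both parts to the fundamental identity
\[
p^{s}(x,y) = 2p(x,y) - p(x,x) - p(y,y)
\]
together with the positivity observation that, by axiom (iii) of the partial metric, both $p(x,y)-p(x,x)$ and $p(x,y)-p(y,y)$ lie in $\mathbb{A}_{+}$ and sum to $p^{s}(x,y)$. These two ingredients between them handle essentially every claim in the lemma.

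For part (1), I would first recognise that the expression appearing in the partial Cauchy condition,
\[
p(x_{n},x_{m}) - \tfrac{1}{2}p(x_{n},x_{n}) - \tfrac{1}{2}p(x_{m},x_{m}),
\]
equals $\tfrac{1}{2}p^{s}(x_{n},x_{m})$. The inequality $aa^{\ast}\preceq \varepsilon^{2}I$ is then equivalent, via the $C^{\ast}$-identity $\|aa^{\ast}\|=\|a\|^{2}$, to $\|p^{s}(x_{n},x_{m})\|\le 2\varepsilon$, which is precisely Cauchyness for the $C^{\ast}$-algebra valued metric $p^{s}$ up to a trivial rescaling of $\varepsilon$. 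No further work is needed for this part.

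For part (2), the first of the two ``furthermore'' equivalences is a direct substitution from the identity above and carries no content beyond the definition of $p^{s}$. The substantive equivalence is the second one: writing
\[
p^{s}(x_{n},x) = \bigl(p(x_{n},x) - p(x_{n},x_{n})\bigr) + \bigl(p(x_{n},x) - p(x,x)\bigr)
\]
as a sum of two elements of $\mathbb{A}_{+}$, the direction $(\Leftarrow)$ is immediate from norm subadditivity, while $(\Rightarrow)$ follows by operator-monotonicity of the norm on positive elements ($\theta\preceq a\preceq a+b \Rightarrow \|a\|\le\|a+b\|$ for $a,b\in\mathbb{A}_{+}$), which forces each positive summand to have norm dominated by $\|p^{s}(x_{n},x)\|$ and hence to vanish in the limit. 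The global completeness equivalence then drops out of part (1): a partial Cauchy sequence is $p^{s}$-Cauchy by (1), its $p^{s}$-limit $x\in X$ satisfies the partial metric convergence condition of Definition 2.3(3) by the identity above, and conversely. The only genuinely non-trivial ingredient is the operator-monotonicity of the norm on $\mathbb{A}_{+}$, which is what licenses the splitting of a single vector limit into the pair of simultaneous scalar-normed limits displayed in the statement; the rest is bookkeeping with the definitions.
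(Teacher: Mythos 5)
The paper does not actually prove this lemma: it is imported verbatim from the reference [SCD] (Chandok--Kumar--Park) and stated without argument, so there is no in-paper proof to compare yours against. Judged on its own, your argument is correct and is essentially the standard one. The identification of the partial-Cauchy expression with $\tfrac{1}{2}p^{s}(x_{n},x_{m})$, and the translation of $aa^{\ast}\preceq\varepsilon^{2}I$ into $\Vert a\Vert\le\varepsilon$ via the $C^{\ast}$-identity, settle part (1) up to the harmless rescaling you note. For part (2) you correctly isolate the only non-trivial point: the decomposition $p^{s}(x_{n},x)=\bigl(p(x_{n},x)-p(x_{n},x_{n})\bigr)+\bigl(p(x_{n},x)-p(x,x)\bigr)$ into two elements of $\mathbb{A}_{+}$ (both positive by axioms (ii) and (iii)), after which the forward implication needs the monotonicity of the norm on positive elements, $\theta\preceq a\preceq a+b\Rightarrow\Vert a\Vert\le\Vert a+b\Vert$, which is a standard $C^{\ast}$-algebra fact. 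The completeness equivalence then follows from part (1) together with the observation that the convergence condition in Definition 2.3(3) is exactly $\lim_{n}\tfrac{1}{2}p^{s}(x_{n},x)=\theta$. I see no gap; the one cosmetic caveat is that the displayed partial-Cauchy condition in the paper writes $\preceq\varepsilon^{2}$ rather than $\preceq\varepsilon^{2}I$, and your reading of it as the latter is the only one that makes the statement meaningful.
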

\begin{lemma}\cite{SCD}
Assume that $ x_{n} \rightarrow x $ and $ y_{n} \rightarrow y $ as $ n \rightarrow \infty $ in a $ C^{\ast}- $
algebra valued partial metric space  $\left( X ,\mathbb{A} ,p\right) $.
Then 
\begin{center}
	$ lim_{n \rightarrow \infty} (p(x_{n},y_{n}) -p(x_{n},x_{n})) =p(x,y)-p(x,x) $
\end{center}
and
\begin{center}
		$ lim_{n \rightarrow \infty} (p(x_{n},y_{n}) -p(y_{n},y_{n})) =p(x,y)-p(y,y) $
\end{center}
\end{lemma}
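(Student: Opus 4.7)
The plan is to exploit the modified triangle inequality (iv), namely $p(a,c) \preceq p(a,b) + p(b,c) - p(b,b)$, in both directions so as to sandwich the self-adjoint quantity $\bigl[p(x_n,y_n) - p(x_n,x_n)\bigr] - \bigl[p(x,y) - p(x,x)\bigr]$ between two error terms that vanish in norm, and then to invoke the standard $C^{\ast}$-algebra fact that a self-adjoint element $b$ with $-e_n \preceq b \preceq e'_n$ and $e_n, e'_n \succeq \theta$ satisfies $\|b\| \leq \|e_n\| + \|e'_n\|$.

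For the upper bound I would apply (iv) twice, inserting $x$ and then $y$ as intermediate points in $p(x_n,y_n)$, then subtract $p(x_n,x_n) + p(x,y) - p(x,x)$ from both sides to get
\[
\bigl[p(x_n,y_n) - p(x_n,x_n)\bigr] - \bigl[p(x,y) - p(x,x)\bigr] \preceq \bigl[p(x_n,x) - p(x_n,x_n)\bigr] + \bigl[p(y,y_n) - p(y,y)\bigr].
\]
The lower bound is obtained symmetrically by applying (iv) twice to $p(x,y)$ through the intermediate points $x_n$ and then $y_n$ and rearranging, yielding
\[
\bigl[p(x,y) - p(x,x)\bigr] - \bigl[p(x_n,y_n) - p(x_n,x_n)\bigr] \preceq \bigl[p(x,x_n) - p(x,x)\bigr] + \bigl[p(y_n,y) - p(y_n,y_n)\bigr].
\]

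The closing step is to invoke the hypotheses $x_n \to x$ and $y_n \to y$ together with Lemma 2.4, which supplies $\lim (p(x_n,x) - p(x_n,x_n)) = \theta$ and $\lim (p(y_n,y) - p(y_n,y_n)) = \theta$ in addition to the usual $\lim (p(x_n,x) - p(x,x)) = \theta$ and its analogue for $y$. Under these, all four error brackets tend to $\theta$ in norm, and the sandwich estimate gives the first displayed limit. The second conclusion of the lemma follows by the entirely symmetric argument that interposes $y$ before $x$ (respectively $y_n$ before $x_n$) in the two-step application of (iv). The principal obstacle is the control of the asymmetric quantities $p(x_n,x) - p(x_n,x_n)$ and $p(y_n,y) - p(y_n,y_n)$: axiom (iii) only guarantees that these are nonnegative, so their vanishing is not immediate from the naive statement of $p$-convergence but must be extracted from the strengthened form of convergence embodied in Definition 2.3(3) and made explicit through Lemma 2.4 via the induced $C^{\ast}$-algebra valued metric $p^s$.
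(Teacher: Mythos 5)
The paper does not actually prove this lemma; it is imported verbatim from the cited source \cite{SCD}, so there is no in-paper argument to compare against. Your sandwich argument is the standard proof and the algebra checks out: the two-sided application of axiom (iv) (inserting $x$ then $y$ into $p(x_n,y_n)$, and $x_n$ then $y_n$ into $p(x,y)$) gives exactly the upper and lower bounds you display, all four error brackets are positive by axiom (iii), and the norm estimate for a self-adjoint element squeezed between $-e_n$ and $e'_n$ with $e_n,e'_n\succeq\theta$ is correct. The one point that deserves more than the passing remark you give it is the convergence hypothesis. You correctly identify the vanishing of $p(x_n,x)-p(x_n,x_n)$ as the crux, but it is worth saying plainly that under Definition 2.3(1) alone --- which only asserts $p(x_n,x)-p(x,x)\to\theta$ --- the lemma is false: take $X=[0,1]$, $\mathbb{A}=\mathbb{C}$, $p(s,t)=\max\{s,t\}$, $x_n=1/n$, $x=1$, $y_n=y=1/2$; then $p(x_n,x)=1=p(x,x)$ for all $n$, so $x_n\to x$ in the sense of Definition 2.3(1), yet $p(x_n,y_n)-p(x_n,x_n)=\tfrac12-\tfrac1n\to\tfrac12$ while $p(x,y)-p(x,x)=0$. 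So the hypothesis must be read as convergence with respect to $p^{s}$, i.e.\ the conjunction of the two limits appearing in Lemma 2.4(2), which is how convergence is actually used in the rest of the paper (Definition 2.3(3)). Note also that Lemma 2.4(2) is only an equivalence: citing it does not upgrade Definition 2.3(1)-convergence to what you need; the stronger hypothesis has to be assumed, not derived. With that reading made explicit, your proof is complete and is, as far as one can tell, the intended one.
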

\section{Main result}
\begin{definition} Let $ (X,\mathbb{A},d) $ be a $ C^{\ast}- $	algebra valued metric space 
	
	$ \varphi:X\rightarrow \mathbb{A}^{+}  $  be a given function, and $ T:X \rightarrow X $ be an operator.
	
	We denote by 
	
	$ T^{0}:= 1_{X} $ ,  $ T^{1}:= T $ ,  $ T^{n+1}:= T\mathsf{o} T ^{n} ; n \in \mathbb{N} $
	
	the iterate operators of $ T $.
	
	We denote $ F_{T}:= \{x\in X; Tx= x \} $ the set of all fixed points of  $ T $.
	
	and  $ Z_{\varphi}:= \{x\in X; \varphi(x)= \theta \} $ the set of all zeros of the function $ \varphi $ .
	
	An element $ z \in X $ is a $ \varphi- $ fixed point of the operator $ T $ if and only if $ z \in F_{T}\cap Z_{\varphi} $.
	\end{definition}
\begin{definition} 
	We say that $ T $ is a $ \varphi- $ Picard operator if and only if
	\begin{itemize}
		\item [(i)] $  F_{T}\cap Z_{\varphi}= \{z\} $.
		\item [(ii)] $ T^{n}x \rightarrow z $ as $ n\rightarrow \infty $, $ \forall x\in X $
	\end{itemize} 
\end{definition}
\begin{definition} $ T $  said to be a weakly $ \varphi- $ Picard operator if and only if
	\begin{itemize}
		\item [(i)] $  F_{T}\cap Z_{\varphi} \not= \O $.
		\item [(ii)] the sequence  $ T^{n}x \rightarrow z $ converges for each $  x\in X $ and the limit is a $ \varphi- $ fixed point of the operator $ T $. 
	\end{itemize} 
	\end{definition}
\begin{definition}Let $\mathcal{F} $ the set of functions $ F: \mathbb{A}_{+}^{3}  \rightarrow  \mathbb{A}_{+}$ satisfying the following conditions:
	\begin{itemize}
		\item [(F1)] $ max\{x,y\} \preceq F(x,y,z) $ for all $ x,y,z \in \mathbb{A}_{+} $
		\item [(F2)] $ F(\theta, \theta, \theta) = \theta$
		\item [(F3)] $ F $ is continuous.
		\end{itemize}
\end{definition}
\begin{example}Let $ \mathbb{A}_{+}=\mathbb{R}^{2}_{+} $ and  $ F: \mathbb{A}_{+}^{3} \rightarrow \mathbb{A}_{+} $ such that
	
	 $ F(x,y,z)=x+ y+z $ , we have $ F \in \mathcal{F} $.
	\end{example}
\begin{definition} 
	Let $ (X,\mathbb{A},d) $ be a $C^{\ast}$-algebra valued metric space,
	
	 $ \varphi :X \rightarrow \mathbb{A}^{+} $ be a function, and $ F \in \mathcal{F} $.
	
The operator $ T: X \rightarrow X $ is said an $ (F,\varphi)- $ contraction with respect to the metric $ d $ if and only if 	
\begin{center}
	$ F(d(Tx,Ty), \varphi(Tx),\varphi(Ty)) \preceq k F(d(x,y),\varphi(x),\varphi(y))$  $ x,y\in X $, $ k\in (0,1) $ $(1)  $.
\end{center}
\end{definition}
\begin{theorem} 
	Let $ (X,\mathbb{A},d) $ be a complete $C^{\ast}$-algebra valued metric space,
	
	 $ \varphi :X \rightarrow \mathbb{A}^{+} $ be a function, and $ F \in \mathcal{F} $. Suppose that the following condictions hold:
	\begin{itemize}
		\item [(a)] $ \varphi $ is lower semi-continous
		\item[(b)] $ T: X \rightarrow X $ is an $ (F,\varphi)- $ contraction with respect to the metric $ d $ 
	\end{itemize}
	Then 
		\begin{itemize}
		\item [(i)] $ F_{T} \subseteq Z_{\varphi} $
		\item[(ii)] $ T $ is a $ \varphi- $ Picard operator
		\item [(iii)] $ \forall x\in X $, $ \forall n \in \mathbb{N} $
		
		$ d(T^{n}x,z)  \preceq \dfrac{k^{n}}{1-k} F(d(Tx,x),\varphi(Tx),\varphi(x))$ where $ \{z\} \in F_{T} \cap Z_{\varphi}= F_{T} $
	\end{itemize}

\end{theorem}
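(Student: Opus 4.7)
The argument splits into five steps, mirroring a classical Banach-type fixed point scheme but adapted to the auxiliary map $F$ and the function $\varphi$.

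\textbf{Step 1 (Proof of (i)).} Given $z\in F_T$, I would plug $x=y=z$ into the contraction inequality to obtain
$F(\theta,\varphi(z),\varphi(z))\preceq k\,F(\theta,\varphi(z),\varphi(z))$.
Setting $a:=F(\theta,\varphi(z),\varphi(z))\in\mathbb{A}_+$ this gives $(1-k)a\preceq\theta$, hence $a=\theta$ since $1-k>0$ and $a\succeq\theta$. Axiom (F1) then yields $\varphi(z)\preceq a=\theta$, so $\varphi(z)=\theta$, i.e.\ $z\in Z_\varphi$.

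\textbf{Step 2 (Picard iterates are Cauchy).} Fix $x\in X$ and set $x_n:=T^n x$. Writing $\Delta_n:=F(d(x_{n+1},x_n),\varphi(x_{n+1}),\varphi(x_n))$, an induction via (1) gives $\Delta_n\preceq k^n\Delta_0$. By (F1),
\begin{equation*}
d(x_{n+1},x_n)\preceq\Delta_n\preceq k^n\Delta_0,\qquad \varphi(x_n)\preceq\Delta_n\preceq k^n\Delta_0.
\end{equation*}
The triangle inequality combined with the geometric bound yields
\begin{equation*}
d(x_m,x_n)\preceq\sum_{i=n}^{m-1}k^i\Delta_0\preceq\frac{k^n}{1-k}\Delta_0
\end{equation*}
for $m>n$. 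Taking $C^\ast$-norms and using $0\preceq u\preceq v\Rightarrow\|u\|\le\|v\|$, I conclude $\|d(x_m,x_n)\|\to 0$, so $\{x_n\}$ is Cauchy; by completeness $x_n\to z$ for some $z\in X$.

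\textbf{Step 3 (The limit is a $\varphi$-fixed point).} Lower semi-continuity of $\varphi$ together with $\|\varphi(x_n)\|\le k^n\|\Delta_0\|\to 0$ forces $\theta\preceq\varphi(z)\preceq\liminf_n\varphi(x_n)=\theta$, so $z\in Z_\varphi$. To obtain $Tz=z$, apply the contraction to the pair $(x_n,z)$; by (F1),
\begin{equation*}
d(Tx_n,Tz)\preceq F\bigl(d(Tx_n,Tz),\varphi(Tx_n),\varphi(Tz)\bigr)\preceq k\,F\bigl(d(x_n,z),\varphi(x_n),\varphi(z)\bigr).
\end{equation*}
Continuity of $F$ (condition (F3)) and (F2) send the right side to $k\,F(\theta,\theta,\theta)=\theta$, so $x_{n+1}=Tx_n\to Tz$; uniqueness of limits gives $Tz=z$.

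\textbf{Step 4 (Uniqueness, completing (ii)).} If $z,z'\in F_T\cap Z_\varphi$, then $\varphi(z)=\varphi(z')=\theta$, and applying (1) to $(z,z')$ together with (F1) gives $b:=F(d(z,z'),\theta,\theta)$ satisfying $b\preceq kb$, hence $b=\theta$ and $d(z,z')=\theta$, so $z=z'$. Combined with Step 2 this proves $T$ is a $\varphi$-Picard operator.

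\textbf{Step 5 (Error estimate (iii)).} From Step 2, for any starting point $x$ and any $m>n$,
\begin{equation*}
d(T^m x,T^n x)\preceq\frac{k^n}{1-k}F\bigl(d(Tx,x),\varphi(Tx),\varphi(x)\bigr).
\end{equation*}
Letting $m\to\infty$, continuity of $d(\cdot,T^n x)$ in its first argument (standard in $C^\ast$-valued metric spaces) yields (iii).

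\textbf{Main obstacle.} The delicate point is Step 3: passing to the limit requires both lower semi-continuity of $\varphi$ (to force $\varphi(z)=\theta$ without assuming continuity) and continuity of $F$ to kill the right-hand side of the contraction inequality; handling the order-theoretic $\liminf$ in the $C^\ast$-algebra and converting it to a norm statement is the step most likely to need care. The rest is a routine adaptation of Banach's argument, once one remembers that $0\preceq u\preceq v$ implies $\|u\|\le\|v\|$ in $\mathbb{A}$.
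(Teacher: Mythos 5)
Your proposal is correct and follows essentially the same route as the paper: show $F(\theta,\varphi(u),\varphi(u))=\theta$ via the contraction with $x=y=u$ and apply (F1); bound $d(T^{n+1}x,T^{n}x)$ by $k^{n}F(d(Tx,x),\varphi(Tx),\varphi(x))$ to get a Cauchy sequence; use lower semi-continuity of $\varphi$ and continuity of $F$ at the limit; and sum the geometric series for the rate estimate. The only (harmless) variation is in showing $Tz=z$: you deduce $Tx_{n}\to Tz$ and invoke uniqueness of limits, whereas the paper passes to the limit in the full $F$-inequality and applies (F1) to $F(d(z,Tz),\theta,\varphi(Tz))\preceq\theta$.
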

\begin{proof}Let $ u\in X $ be a fixed point of $ T $. By $ (1) $  with $ x=y=u $ we obtain
	
	 $ F(\theta,\varphi(u),\varphi(u)) \preceq k F(\theta, \varphi(u),\varphi(u))$ 
	$ \Rightarrow F(\theta, \varphi(u),\varphi(u))= \theta $ $ (k\in (0,1)) $ $ (2) $
	
	From $ (F1) $ we have
	
	    $ \varphi(u) \preceq F(\theta ,\varphi(u),\varphi(u)) $  $ (3) $
	 
	 By $ (2) $ and $ (3) $ we obtain $ \varphi(u)=\theta $ , which proves $ (i) $
	 
	 Let $ x\in X $ by $ (1) $ we have 
	 
	 	$ F(d(T^{n+1}x,T^{n}x), \varphi(T^{n+1}x),\varphi(T^{n}x)) \preceq k F(d(T^{n}x,T^{n-1}y),\varphi(T^{n}x),\varphi(T^{n}x))$  $ n \in \mathbb{N} \cup \{0\} $
	 	\begin{center}
	 	$ \Rightarrow F(d(T^{n+1}x,T^{n}x), \varphi(T^{n+1}x),\varphi(T^{n}x))  \preceq k^{n} F(d(Tx,x),\varphi(Tx), \varphi(x)) $ $ n \in \mathbb{N} \cup \{0\} $
	 	\end{center}
 	By $ F1 $ we have 
 	
 	$ max\{d(T^{n+1}x,T^{n}x),\varphi(T^{n+1}x)\} \preceq k^{n} F(d(Tx,x),\varphi(Tx), \varphi(x)) $
 	 $ n \in \mathbb{N} \cup \{0\}  $ $ (4) $
 	
 	$ \Rightarrow d(T^{n+1}x,T^{n}x)\preceq k^{n} F(d(Tx,x),\varphi(Tx), \varphi(x)) $$ n \in \mathbb{N} \cup \{0\}  $
 	
 	Since $ k\in (0,1) $ wich implies that $ \{T^{n}x\}\ $ is a Cauchy sequence. By the completeness of $ (X,\mathbb{A},d) $, there exists a $ z\in X $ such 
 	
 	$ lim_{n\rightarrow \infty}d(T^{n}x,z)= \theta $ $ (5) $
 	
 	From $ (5) $ we have $ lim_{n\rightarrow \infty}\varphi(T^{n+1}x) = \theta$ $ (6) $
 	
 	Using that $ \varphi  $ is semi -continuous, and  from $ (5) $ and $ (6) $ we obtain $ \varphi(z)= \theta $
 	
 	Then
 	
 		$ F(d(T^{n+1}x,Tz), \varphi(T^{n+1}x),\varphi(Tz)) \preceq k F(d(T^{n}x,z),\varphi(T^{n}x),\varphi(z))$  , $ n\in \mathbb{N} \cup \{0\} $ $ (7) $
  	
 	Letting $ n\rightarrow \infty $ in $ (7) $ , using $  (5)$ ,$ (6)  $ , $ F2 $ and the continuity of $ F $ we have 
 	
 	$ F(d(z,Tz),\theta, \varphi (Tz)) \preceq k F(\theta,\theta,\theta)= \theta $.
 	
 	From $ F1 $ we obtain $ d(z,Tz)=\theta $ i.e $ z $ is a $ \varphi- $ fixed point of $ T $.

	Let $ v \in X $ another $ \varphi- $ fixed point of $ T $.
	
 	Putting $ x=z $ and $ y= v $ in $ (1) $ we have
 	 \begin{center}
 	 $ F(d(z,v),\theta, \theta)) \preceq k F(d(z,v),\theta,\theta)$  
 	
 	$ \Rightarrow d(z,v)= \theta$
 	
 		$ \Rightarrow z=v $    so we get $ (ii) $
 \end{center}	
 	Using $ (4) $ we get 
 	
 	$ d(T^{n}x,T^{n+m}x)  \preceq \dfrac{k^{n}(1-k^{m})}{1-k} F(d(Tx,x),\varphi(Tx),\varphi(x))$ $ n,m\in \mathbb{N} \cup \{0\}   $.
 	
 	Letting $ m\rightarrow \infty $ in the above inequality and using $ (5) $ we get
 	
 		$ d(T^{n}x,z)  \preceq \dfrac{k^{n}}{1-k} F(d(Tx,x),\varphi(Tx),\varphi(x))$ $ n\in \mathbb{N} \cup \{0\}   $. 	
\end{proof}
\begin{example} Let $ X =[0,1] $ and $ \mathbb{A}= \mathbb{R}^{2} $ .
	
	Define $ d:X\times X \rightarrow \mathbb{R}^{2}$ such that $ d(x,y)= (0,x+y) $ $ \forall x,y \in X $ , then $ (X,\mathbb{R}^{2},d) $ is a $ C^{\ast}- $ algebra valued metric space. 
	
	Let $ \varphi:X \rightarrow  \mathbb{R}^{2}$ such that $ \varphi(x)= (x,x) $
	
	and $ F:\mathbb{R}^{2}\times \mathbb{R}^{2} \times \mathbb{R}^{2} \rightarrow \mathbb{R}^{2}$ such that $ F(a,b,c) =a+b+c $ $ \forall a,b,c \in \mathbb{R}^{2} $
	
	Let $ T:X \rightarrow X $ such that $ Tx= \dfrac{x}{2} $.
	
	we have 
	$ F(d(Tx,Ty),\varphi(Tx),\varphi(Ty))= (\dfrac{x+y}{2}, x+y) $
	
and	$ F(d(x,y),\varphi(x),\varphi(y))= (x+y, 2(x+y) )$
	
	Then $ T $ has the required propreties montioned in theorem $ 3.7 $ so we obtain that $ 0 $ is a $ \varphi- $ fixed point of $ T $.
\end{example}
\begin{remark} 
	Taking $ \varphi=\theta $ and $ F(a,b,c)=a+b+c $ in Theorem $ 3.7 $ we get the Banach contraction principle in $C^{\ast}$-algebra valued metric space.
\end{remark}
	\begin{definition} 
		Let $ (X,\mathbb{A},d) $ be a $C^{\ast}$-algebra valued metric space, $ \varphi :X \rightarrow \mathbb{A}^{+} $ be a function, and $ F \in \mathcal{F} $
		  
		   $ T: X \rightarrow X $  said to be a graphic $ (F,\varphi)- $ contraction with respect to the metric $ d $ if and only if 	
		   
		   $ F(d(T^{2}x,Tx),\varphi(T^{2}x),\varphi(Tx)) \preceq kF(d(Tx,x),\varphi(Tx),\varphi(x)) $ $ x\in X $ $ k \in (0,1) $
	\end{definition}
\begin{theorem} 
	Let $ (X,\mathbb{A},d) $ be a complete $C^{\ast}$-algebra valued metric space, 
		
		$ \varphi :X \rightarrow \mathbb{A}^{+} $ be a function, and $ F \in \mathcal{F} $. Suppose that the following condictions hold:
		\begin{itemize}
			\item [(a)] $ \varphi $ is lower semi-continous
			\item [(b)] $ T: X \rightarrow X $ is a graphic $ (F,\varphi)- $ contraction with respect to the metric $ d $
			\item [(c)] $ T $ is continuous 
		\end{itemize}
		Then 
		\begin{itemize}
			\item [(i)] $ F_{T} \subseteq Z_{\varphi} $
			\item [(ii)] $ T $ is a weakly $ \varphi- $ Picard operator
			\item [(iii )] $ \forall x\in X $, if $ T^{n}x \rightarrow z $ as $  n\rightarrow \infty $
			
			then
			
			$ d(T^{n}x,z)  \preceq \dfrac{k^{n}}{1-k} F(d(Tx,x),\varphi(Tx),\varphi(x))$  $ n\in \mathbb{N} $ $ (8) $
		\end{itemize}
		\end{theorem}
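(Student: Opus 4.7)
The plan is to follow the template of Theorem 3.7, restricted to the orbit $\{T^n x\}_{n\ge 0}$, with one substantive change: since the graphic contraction is only available along orbits (not at arbitrary pairs), I cannot pass to the limit by applying the inequality at $(T^n x, z)$ as before, and must instead invoke the continuity hypothesis (c) to identify the limit point as a fixed point. Consequently the conclusion weakens from $\varphi$-Picard to weakly $\varphi$-Picard, since uniqueness of the limit across different starting points $x$ cannot be forced.

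For (i), the argument is verbatim from Theorem 3.7. If $u \in F_T$ then $T^2 u = Tu = u$, so substituting $x=u$ in the graphic contraction gives $F(\theta,\varphi(u),\varphi(u)) \preceq k\,F(\theta,\varphi(u),\varphi(u))$, which with $k\in(0,1)$ forces $F(\theta,\varphi(u),\varphi(u))=\theta$. Applying (F1) then yields $\varphi(u) \preceq F(\theta,\varphi(u),\varphi(u)) = \theta$, so $u \in Z_{\varphi}$.

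For (ii) and (iii), fix $x \in X$ and iterate the graphic contraction along consecutive orbit points to obtain
\[
F\bigl(d(T^{n+1}x,T^n x),\varphi(T^{n+1}x),\varphi(T^n x)\bigr) \preceq k^{n} F\bigl(d(Tx,x),\varphi(Tx),\varphi(x)\bigr).
\]
By (F1) this gives $d(T^{n+1}x,T^n x) \preceq k^n F(d(Tx,x),\varphi(Tx),\varphi(x))$. The triangle inequality for $d$ together with the geometric sum then yields the telescoping bound
\[
d(T^n x, T^{n+m}x) \preceq \tfrac{k^n(1-k^m)}{1-k}\, F\bigl(d(Tx,x),\varphi(Tx),\varphi(x)\bigr),
\]
so in norm $\{T^n x\}$ is Cauchy. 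By completeness there exists $z \in X$ with $T^n x \to z$. Continuity of $T$ then gives $T^{n+1}x = T(T^n x) \to Tz$, while $T^{n+1}x \to z$ already; uniqueness of the limit forces $Tz=z$. Thus $z \in F_T$, and by (i) also $z \in Z_{\varphi}$, establishing (ii). Letting $m \to \infty$ in the telescoping bound, using continuity of $d$ in each slot to pass to the limit, yields (iii).

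The main obstacle is only technical: one must verify that the telescoping argument, the scalar multiplications by $k^n \in (0,1)$, and the limit $m \to \infty$ all interact correctly with the partial order $\preceq$ and the norm topology on $\mathbb{A}$. All of these manipulations are standard in the $C^{\ast}$-algebra-valued metric setting and already implicitly used in the proof of Theorem 3.7. I would also note, as a minor remark, that the lower semi-continuity hypothesis on $\varphi$ does not actually intervene in the argument above: unlike in Theorem 3.7, where LSC was essential to transfer $\varphi(T^n x) \to \theta$ to $\varphi(z)=\theta$, here $\varphi(z)=\theta$ falls out of (i) as soon as $Tz=z$ is obtained from continuity of $T$.
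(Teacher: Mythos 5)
Your proof is correct, and it in fact diverges from the paper's own argument at precisely the point where the paper's argument is defective. The paper proves this theorem by copying the proof of Theorem 3.7 almost verbatim: after showing $\{T^{n}x\}$ is Cauchy with limit $z$, it applies the contraction inequality at the \emph{pair} $(T^{n}x,z)$ to get $F(d(T^{n+1}x,Tz),\varphi(T^{n+1}x),\varphi(Tz))\preceq kF(d(T^{n}x,z),\varphi(T^{n}x),\varphi(z))$, and later at the pair $(z,v)$ for uniqueness. Neither step is legitimate here, since a graphic $(F,\varphi)$-contraction only provides the inequality along orbits (at $(Tx,x)$ for a single $x$), not at arbitrary pairs; tellingly, the paper's proof never invokes hypothesis (c). Your route --- using continuity of $T$ to get $T^{n+1}x=T(T^{n}x)\to Tz$ and hence $Tz=z$ by uniqueness of limits, then deducing $\varphi(z)=\theta$ from part (i) --- is the standard and correct way to handle graphic contractions, and it explains why the conclusion is only \emph{weakly} $\varphi$-Picard (no uniqueness claim is available or needed). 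Your side remark that lower semi-continuity of $\varphi$ becomes superfluous in this argument is also accurate. The one place where you should be slightly more explicit is the passage to the limit as $m\to\infty$ in the telescoping bound: the clean justification is that $\|d(T^{n}x,z)\|\le\|d(T^{n}x,T^{n+m}x)\|+\|d(T^{n+m}x,z)\|$ combined with the closedness of $\mathbb{A}_{+}$ in $\mathbb{A}$, which lets the order inequality survive the norm limit; but this is routine and matches what the paper itself does implicitly.
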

\begin{proof}Let $ u\in X $ be a fixed point of $ T $. Applying $ (8) $  with $ x=u $, we obtain
	
	$ F(\theta,\varphi(u),\varphi(u)) \preceq k F(\theta, \varphi(u),\varphi(u))$
	 
	$ \Rightarrow F(\theta, \varphi(u),\varphi(u))= \theta $ $ (k\in (0,1)) $ $ (9) $
	
	From $ (F1) $ we have
	
	$ \varphi(u) \preceq F(\theta ,\varphi(u),\varphi(u)) $  $ (10) $
	
	By $ (9) $ and $ (10) $ we obtain $ \varphi(u)=\theta $ , which proves $ (i) $
	
	Let $ x\in X $ by $ (8) $ we have 
	
	$ F(d(T^{n+1}x,T^{n}x), \varphi(T^{n+1}x),\varphi(T^{n}x)) \preceq k F(d(T^{n}x,T^{n-1}x),\varphi(T^{n}x),\varphi(T^{n}x))$  $ n \in \mathbb{N} \cup \{0\} $
	\begin{center}
		$ \Rightarrow F(d(T^{n+1}x,T^{n}x), \varphi(T^{n+1}x),\varphi(T^{n}x))  \preceq k^{n} F(d(Tx,x),\varphi(Tx), \varphi(x)) $ $ n \in \mathbb{N} \cup \{0\} $
	\end{center}
	By $ F1 $ we have 
	
	$ max\{d(T^{n+1}x,T^{n}x),\varphi(T^{n+1}x)\} \preceq k^{n} F(d(Tx,x),\varphi(Tx), \varphi(x)) $ $ n \in \mathbb{N} \cup \{0\}  $ $ (11) $
	
	$ \Rightarrow d(T^{n+1}x,T^{n}x)\preceq k^{n} F(d(Tx,x),\varphi(Tx), \varphi(x)) $$ n \in \mathbb{N} \cup \{0\}  $
	
	Since $ k\in (0,1) $ which implies that $ \{T^{n}x\} $ is a Cauchy sequence. By the completeness of $ (X,\mathbb{A},d )$, there exists a $ z\in X $ such 
	
	$ lim_{n\rightarrow \infty}d(T^{n}x,z)= \theta $ $ (12) $
	
	From $ (12) $ we have $ lim_{n\rightarrow \infty}\varphi(T^{n+1}x) = \theta$ $ (13) $
	
	Using that $ \varphi  $ is lower semi -continuous, and  from $ (12) $ and $ (13) $ we obtain $ \varphi(z)= \theta $
	
	Then
	
	$ F(d(T^{n+1}x,Tz), \varphi(T^{n+1}x),\varphi(Tz)) \preceq k F(d(T^{n}x,z),\varphi(T^{n}x),\varphi(z))$  , $ n\in \mathbb{N} \cup \{0\} $. $ (14) $
	
	Letting $ n\rightarrow \infty $ in $ (14) $ , using $  (12)$ , $ (13)  $ , $ F2 $ and the continuity of $ F $ we have 
	
	$ F(d(z,Tz),\theta, \varphi (Tz)) \preceq k F(\theta,\theta,\theta)= \theta $.
	
	From $ F1 $ we obtain $ d(z,Tz)=\theta $ i.e $ z $ is a $ \varphi- $ fixed point of $ T $.
	
	Let $ v \in X $ another $ \varphi- $ fixed point of $ T $.
	
	Putting $ x=z $ and $ y= v $ in $ (8) $ we have
		\begin{center}
		$ F(d(z,v),\theta, \theta)) \preceq k F(d(z,v),\theta,\theta)$  
		
		$ \Rightarrow d(z,v)= \theta$

		$ \Rightarrow z=v $    so we get $ (ii) $
	\end{center}	
	Using $ (11) $ we get 
	
	$ d(T^{n}x,T^{n+m}x)  \preceq \dfrac{k^{n}(1-k^{m})}{1-k} F(d(Tx,x),\varphi(Tx),\varphi(x))$ $ n,m\in \mathbb{N} \cup \{0\}   $
	
	letting $ m\rightarrow \infty $ in the above inequality and using $ (12) $ we get
	
	$ d(T^{n}x,z)  \preceq \dfrac{k^{n}}{1-k} F(d(Tx,x),\varphi(Tx),\varphi(x))$ $ n\in \mathbb{N} \cup \{0\}   $
	\end{proof}
\begin{definition} 
	Let $ (X,\mathbb{A},d) $ be a $C^{\ast}$-algebra valued metric space,
	
	 $ \varphi :X \rightarrow \mathbb{A}^{+} $ be a function, and $ F \in \mathcal{F} $.
	
	The operator $ T: X \rightarrow X $  said to be an $ (F,\varphi)- $   weak contraction with respect to the metric $ d $ if and only if 	
	\begin{center}
		$ F(d(Tx,Ty), \varphi(Tx),\varphi(Ty)) \preceq k F(d(x,y),\varphi(x),\varphi(y)) + \alpha( F(d(y,Tx),\varphi(y),\varphi(Tx)) -F(\theta,\varphi(y),\varphi(Tx))) $  $ x,y\in X $,$ k\in (0,1) ,\alpha \geq 0 $ $(15)  $.
	\end{center}
	\end{definition}
\begin{theorem} Let $ (X,\mathbb{A},d) $ be a complete $C^{\ast}$-algebra valued metric space,
		
		 $ \varphi :X \rightarrow \mathbb{A}^{+} $ be a function, and $ F \in \mathcal{F} $.
		 
		  Suppose that the following condictions hold:
	\begin{itemize}
		\item [(a)] $ \varphi $ is lower semi-continous
		\item[(b)] $ T: X \rightarrow X $ is an  $ (F,\varphi)- $ weak contraction with respect to the metric $ d $
		\end{itemize}
	Then 
	\begin{itemize}
		\item [(i)] $ F_{T} \subseteq Z_{\varphi} $
		\item[(ii)] $ T $ is a weakly $ \varphi- $ Picard operator
		\item [(iii )] $ \forall x\in X $, if $ T^{n}x \rightarrow z $ as $  n\rightarrow \infty $
		
		then
		
		$ d(T^{n}x,z)  \preceq \dfrac{k^{n}}{1-k} F(d(Tx,x),\varphi(x),\varphi(Tx)) $ 
		 $ n\in \mathbb{N} $ 
	\end{itemize}
	\end{theorem}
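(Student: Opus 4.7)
The plan is to mirror the proof of Theorem 3.7 (the $(F,\varphi)$-contraction case), the only wrinkle being the extra $\alpha$-term in (15). The crucial observation is that this term collapses to $\theta$ whenever the arguments make $d(y,Tx)=\theta$, since then $F(d(y,Tx),\varphi(y),\varphi(Tx))$ and $F(\theta,\varphi(y),\varphi(Tx))$ coincide. I will engineer every application of (15) so that this happens; once it does, the inequality reduces to an $(F,\varphi)$-contraction inequality and the proof of Theorem 3.7 applies almost verbatim.

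\textbf{Steps (i) and (ii).} For (i), take $u\in F_T$ and plug $x=y=u$ into (15). Then $Tx=u=y$, so $d(y,Tx)=\theta$ and the $\alpha$-bracket vanishes, leaving $F(\theta,\varphi(u),\varphi(u))\preceq k\,F(\theta,\varphi(u),\varphi(u))$. Since $k\in(0,1)$ this forces $F(\theta,\varphi(u),\varphi(u))=\theta$, and (F1) gives $\varphi(u)=\theta$. For the Picard property, fix $x_0\in X$ and apply (15) with the diagonal substitution $x\mapsto T^{n-1}x_0$, $y\mapsto T^n x_0$; since $T(T^{n-1}x_0)=T^n x_0=y$, the $\alpha$-bracket vanishes again. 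Iterating gives $F(d(T^{n+1}x_0,T^n x_0),\varphi(T^{n+1}x_0),\varphi(T^n x_0))\preceq k^n F(d(Tx_0,x_0),\varphi(Tx_0),\varphi(x_0))$, so by (F1) both $d(T^{n+1}x_0,T^n x_0)$ and $\varphi(T^{n+1}x_0)$ are bounded by $k^n F(d(Tx_0,x_0),\varphi(Tx_0),\varphi(x_0))$. A standard geometric telescoping argument (just as in Theorem 3.7) shows $\{T^n x_0\}$ is Cauchy, so completeness yields $z\in X$ with $T^n x_0\to z$, and lower semi-continuity of $\varphi$ combined with $\varphi(T^n x_0)\to\theta$ forces $\varphi(z)=\theta$.

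\textbf{Identifying the fixed point and (iii).} To show $Tz=z$, apply (15) with $x=T^n x_0$, $y=z$ and let $n\to\infty$. Using $T^{n+1}x_0\to z$, $\varphi(T^{n+1}x_0)\to\theta$, $\varphi(z)=\theta$, continuity of $F$, (F2), and $d(z,z)=\theta$, the $k$-term tends to $kF(\theta,\theta,\theta)=\theta$ and the $\alpha$-bracket tends to $F(\theta,\theta,\theta)-F(\theta,\theta,\theta)=\theta$; hence $F(d(z,Tz),\theta,\varphi(Tz))=\theta$, and (F1) yields $d(z,Tz)=\theta$, so $z\in F_T\cap Z_\varphi$ and $T$ is weakly $\varphi$-Picard. (Note uniqueness is not available here: taking two such $z,v$, (15) gives $F(d(z,v),\theta,\theta)\preceq(k+\alpha)F(d(z,v),\theta,\theta)$, which does not close when $k+\alpha\ge 1$ — this is precisely why the conclusion is only ``weakly'' Picard.) For (iii), triangle inequality combined with $d(T^{n+i}x_0,T^{n+i+1}x_0)\preceq k^{n+i}F(d(Tx_0,x_0),\varphi(x_0),\varphi(Tx_0))$ gives $d(T^n x_0,T^{n+m}x_0)\preceq\frac{k^n(1-k^m)}{1-k}F(d(Tx_0,x_0),\varphi(x_0),\varphi(Tx_0))$, and $m\to\infty$ finishes.

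\textbf{Main obstacle.} The only real difficulty is the extra $\alpha$-term in (15), which could in principle inject uncontrolled cross-distances $d(y,Tx)$ into the recursion and destroy the geometric decay. The diagonal substitution $y=Tx$ disposes of this in one stroke; apart from that, bookkeeping the three argument slots of $F$ so that (F1) delivers exactly the bound on $d$ or on $\varphi$ that one needs is the only routine care required.
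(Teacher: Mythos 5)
Your proof is correct and follows essentially the same route as the paper's: kill the $\alpha$-term by the diagonal substitution $y=Tx$, reduce to the geometric recursion of Theorem 3.7, and pass to the limit. You are in fact more careful than the paper at two points: you retain the $\alpha$-bracket when proving $Tz=z$ and observe that it tends to $\theta$ by continuity of $F$ (the paper silently omits it in its inequality (21)), and you correctly note that putting $x=z$, $y=v$ only yields $F(d(z,v),\theta,\theta)\preceq (k+\alpha)F(d(z,v),\theta,\theta)$, so uniqueness cannot be deduced when $k+\alpha\geq 1$; the paper's claimed conclusion $z=v$ rests on dropping the $\alpha$-term and is erroneous, though harmless since the theorem only asserts the weakly $\varphi$-Picard property, which your argument fully establishes.
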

\begin{proof}Let $ u\in X $ be a fixed point of $ T $. By $ (15) $  with $ x=y =u $ we obtain
	
	$ F(\theta,\varphi(u),\varphi(u)) \preceq k F(\theta, \varphi(u),\varphi(u))+\alpha( F(\theta,\varphi(u),\varphi(u))-F(\theta,\varphi(u),\varphi(u))$ 
\begin{center}	
	 $= k F(\theta,\varphi(u),\varphi(u)) $
	 
	$ \Rightarrow F(\theta, \varphi(u),\varphi(u))= \theta $ $ (k\in (0,1)) $ $ (16) $
\end{center}
	From $ (F1) $ we have
	
	$ \varphi(u) \preceq F(\theta ,\varphi(u),\varphi(u)) $  $ (17) $
	
	By $ (16) $ and $ (17) $ we obtain $ \varphi(u)=\theta $ , which proves $ (i) $
	
	Let $ x\in X $ by $ (15) $ we have 
	
	$ F(d(T^{n}x,T^{n+1}x), \varphi(T^{n}x),\varphi(T^{n+1}x)) \preceq k F(d(T^{n-1}x,T^{n}x),\varphi(T^{n-1}x),\varphi(T^{n}x)) +\alpha (F( \theta, \varphi(T^{n}x),\varphi(T^{n}x)))-F(\theta, \varphi(T^{n}x),\varphi(T^{n}x))$ 
	
	 $ n \in \mathbb{N} \cup \{0\} $
	\begin{center}
		$ \Rightarrow F(d(T^{n}x,T^{n+1}x), \varphi(T^{n+1}x),\varphi(T^{n}x))  \preceq k^{n} F(d(x,Tx),\varphi(x), \varphi(Tx)) $ $ n \in \mathbb{N} \cup \{0\} $
	\end{center}
	By $ F1 $ we have 
	
	$ max\{d(T^{n+1}x,T^{n}x),\varphi(T^{n+1}x)\} \preceq k^{n} F(d(Tx,x),\varphi(Tx), \varphi(x)) $ $ n \in \mathbb{N} \cup \{0\}  $ $ (18) $
	
	$ \Rightarrow d(T^{n+1}x,T^{n}x)\preceq k^{n} F(d(Tx,x),\varphi(Tx), \varphi(x)) $$ n \in \mathbb{N} \cup \{0\}  $
	
	Since $ k\in (0,1) $ which implies that $ \{T^{n}x\}\ $ is a Cauchy sequence. By the completeness of $ (X,\mathbb{A},d )$, there exists a $ z\in X $ such 
	
	$ lim_{n\rightarrow \infty}d(T^{n}x,z)= \theta $ $ (19) $
	
	From $ (19) $ we have $ lim_{n\rightarrow \infty}\varphi(T^{n+1}x) = \theta$ $ (20) $
	
	Using that $ \varphi  $ is lower semi -continuous, and  from $ (19) $ and $ (20) $ we obtain $ \varphi(z)= \theta $
	
	Then
	
	$ F(d(T^{n+1}x,Tz), \varphi(T^{n+1}x),\varphi(Tz)) \preceq k F(d(T^{n}x,z),\varphi(T^{n}x),\varphi(z))$  , $ n\in \mathbb{N} \cup \{0\} $ $ (21) $
	
	Letting $ n\rightarrow \infty $ in $ (21) $ , using $  (19)$ , $ (20)  $ , $ F2 $ and the continuity of $ F $ we have 
	
	$ F(d(z,Tz),\theta, \varphi (Tz)) \preceq k F(\theta,\theta,\theta)= \theta $.
	
	From $ F1 $ we obtain $ d(z,Tz)=\theta $ i.e $ z $ is a $ \varphi- $ fixed point of $ T $.
	
	Let $ v \in X $ another $ \varphi- $ fixed point of $ T $.
	
	Putting $ x=z $ and $ y= v $ in $ (15) $ we have
	\begin{center}
		$ F(d(z,v),\theta, \theta)) \preceq k F(d(z,v),\theta,\theta)$  
		
		$ \Rightarrow d(z,v)= \theta$
		
		$ \Rightarrow z=v $    so we get $ (ii) $
	\end{center}	
	Using $ (11) $ we get 
	
	$ d(T^{n}x,T^{n+m}x)  \preceq \dfrac{k^{n}(1-k^{m})}{1-k} F(d(Tx,x),\varphi(Tx),\varphi(x))$ $ n,m\in \mathbb{N} \cup \{0\}   $.
	
	Letting $ m\rightarrow \infty $ in the above inequality and using $ (12) $ we get
	
	$ d(T^{n}x,z)  \preceq \dfrac{k^{n}}{1-k} F(d(Tx,x),\varphi(Tx),\varphi(x))$ $ n\in \mathbb{N} \cup \{0\}   $.
\end{proof}
\begin{example}
	Let $ X=\mathbb{C} $ and $ \mathbb{A}= \mathbb{M}_{2}(\mathbb{C}) $ the set of all $ n \times n $ matrices with entries in $ \mathbb{C} $.
	
We define $ d(a,b)=  
		 \begin{pmatrix}
		\begin{array}{cc}
		\vert	a_{1}-b_{1}\vert& 0 \\ 
			0 & \vert a_{2}-b_{2}\vert
		\end{array} 
	\end{pmatrix}  $ 
	
	 $ F: \mathbb{M}_{2}(\mathbb{C}) \times \mathbb{M}_{2}(\mathbb{C}) \times \mathbb{M}_{2}(\mathbb{C}) \rightarrow \mathbb{M}_{2}(\mathbb{C}) $ $ F(A,B,C) = A^{2}+B+C$
	 
	 and $ \varphi : X\rightarrow \mathbb{M}_{2}(\mathbb{C}) $ with $ \varphi(x_{1},x_{2}) =    
	 \begin{pmatrix}
	 	\begin{array}{cc}
	 		2\vert	x_{1}-x_{2}\vert& 0 \\ 
	 		0 & 2\vert x_{1}-x_{2}\vert
	 	\end{array} 
	 \end{pmatrix}  $
 
 and $ T:X \rightarrow X $ defined by $ T(x_{1},x_{2}) = (\dfrac{x_{1}}{2},\dfrac{x_{2}}{2})$
 
 We have $ T $ satisfying the condition $ (15) $ with $ k=\frac{1}{2} $ and $ \alpha=4 $.
 
  Therefore, $ T $ has a $ \varphi- $ fixed point $ z=(0,0) $ .
	 \end{example}
\begin{theorem} Let $ (X,\mathbb{A},d) $ be a complete  $C^{\ast}$-algebra valued metric space,
	
	 $ \varphi :X \rightarrow \mathbb{A}^{+} $ be a function, and $ F \in \mathcal{F} $.
	 
	  Suppose that the following condictions hold:
	\begin{itemize}
		\item [(a)] $ \varphi $ is lower semi-continous
		\item[(b)] $ T: X \rightarrow X $ satisfying 
		
		$ F(d(Tx,Ty),\varphi(Tx),\varphi(Ty)) \preceq k[ F(d(Tx,x),\varphi(Tx),\varphi(x)) +F(d(Ty,y),\varphi(Ty),\varphi(y))] $ $ k\in(0,\frac{1}{2}) $ $ \forall x,y \in X $ $ (21) $
			\end{itemize}
	Then 
	\begin{itemize}
		\item [(i)] $ F_{T} \subseteq Z_{\varphi} $
		\item[(ii)] $ T $ is a $ \varphi- $ Picard operator
	\end{itemize}
	\end{theorem}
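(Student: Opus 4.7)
The plan is to follow the template established in Theorems~3.7, 3.12, and 3.15, adapting each step to accommodate the Kannan-type inequality (21) whose constant satisfies $k\in(0,\tfrac12)$. For (i), I would take $u\in F_T$ and substitute $x=y=u$ in (21) to obtain $F(\theta,\varphi(u),\varphi(u))\preceq 2k\,F(\theta,\varphi(u),\varphi(u))$; since $2k<1$, this forces $F(\theta,\varphi(u),\varphi(u))=\theta$, and then (F1) yields $\varphi(u)\preceq\theta$, hence $\varphi(u)=\theta$.

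To prove the Picard property, I would fix $x_0\in X$ and substitute $x=T^{n-1}x_0$, $y=T^n x_0$ in (21). Writing $a_n:=F(d(T^n x_0,T^{n+1}x_0),\varphi(T^n x_0),\varphi(T^{n+1}x_0))$, the inequality becomes $a_n\preceq k(a_{n-1}+a_n)$, so that $a_n\preceq h\,a_{n-1}$ with $h:=k/(1-k)\in(0,1)$; iteration gives $a_n\preceq h^n a_0$. By (F1) this forces both $d(T^{n+1}x_0,T^n x_0)\preceq h^n a_0$ and $\varphi(T^n x_0)\preceq h^{n-1}a_0\to\theta$, and a telescoping estimate (as carried out in the proof of Theorem~3.7) shows $\{T^n x_0\}$ is Cauchy. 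Completeness then delivers a limit $z\in X$, and the lower semi-continuity of $\varphi$ applied to $\varphi(T^n x_0)\to\theta$ gives $\varphi(z)=\theta$.

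To establish $Tz=z$, I would substitute $x=z$ and $y=T^n x_0$ in (21) to obtain $F(d(Tz,T^{n+1}x_0),\varphi(Tz),\varphi(T^{n+1}x_0))\preceq k\bigl[F(d(Tz,z),\varphi(Tz),\theta)+F(d(T^{n+1}x_0,T^n x_0),\varphi(T^{n+1}x_0),\varphi(T^n x_0))\bigr]$ and then pass $n\to\infty$: by the continuity of $F$ together with (F2), the right-hand side tends to $k\,F(d(Tz,z),\varphi(Tz),\theta)$ and the left-hand side to $F(d(Tz,z),\varphi(Tz),\theta)$, whence $(1-k)F(d(Tz,z),\varphi(Tz),\theta)=\theta$ and (F1) gives $d(Tz,z)=\theta$. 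Uniqueness follows by applying (21) to any two $\varphi$-fixed points $z,v$ with $x=z$, $y=v$: the right-hand side reduces to $2k\,F(\theta,\theta,\theta)=\theta$, so $F(d(z,v),\theta,\theta)=\theta$, and (F1) gives $z=v$.

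The most delicate point, and the one requiring the most care, is the limit passage in the third step: one must justify that $F(d(Tz,T^{n+1}x_0),\varphi(Tz),\varphi(T^{n+1}x_0))\to F(d(Tz,z),\varphi(Tz),\theta)$ in the $C^{\ast}$-algebra norm. This relies on the continuity of $d$ in its second argument (available via the $C^{\ast}$-analogue of Lemma~2.5 specialised to $p=d$), the continuity axiom (F3), and the decay $\varphi(T^n x_0)\to\theta$ secured in the second step. Once these ingredients are in hand the argument proceeds as in the preceding theorems, and no new analytical input beyond the strengthened ratio constraint $k<\tfrac12$ is needed.
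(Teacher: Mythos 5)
Your proposal is correct and follows essentially the same route as the paper's proof: the substitution $x=y=u$ for (i), the recursion $a_n\preceq k(a_{n-1}+a_n)$ yielding the ratio $k/(1-k)\in(0,1)$ and hence a Cauchy orbit, lower semi-continuity of $\varphi$ at the limit, the passage to the limit in (21) with one argument equal to $z$ to get $(1-k)F(d(Tz,z),\cdot,\cdot)\preceq\theta$, and uniqueness via $x=z$, $y=v$. Your version is in fact slightly more careful than the paper's at the limit step (your choice of substitution makes the same expression appear on both sides, so the $(1-k)$ factor is cleanly justified), but the argument is the same.
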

\begin{proof}Let $ u\in X $ be a fixed point of $ T $. By $ (21) $  with $ x=y=u $ we obtain
	
	$ F(\theta,\varphi(u),\varphi(u)) \preceq k[ F(\theta, \varphi(u),\varphi(u))+ F(\theta, \varphi(u),\varphi(u))]$ $ (k\in (0,\frac{1}{2})) $

	$ \Rightarrow F(\theta, \varphi(u),\varphi(u))= \theta $  
	
		From $ (F1) $ we have
	
	$ \varphi(u) \preceq F(\theta ,\varphi(u),\varphi(u)) $ $ \Rightarrow \varphi(u)= \theta $
	
	which we proves $ (i) $.
	
		Let $ x\in X $ be an arbitrary point, we have 
	
	$ F(d(T^{n+1}x,T^{n}x), \varphi(T^{n+1}x),\varphi(T^{n}x)) \preceq k [ F(d(T^{n+1}x,T^{n}x),\varphi(T^{n+1}x),\varphi(T^{n}x)) + F(d(T^{n}x,T^{n-1}x) , \varphi(T^{n}x),\varphi(T^{n-1}x))]$ 
	
	$ n \in \mathbb{N} \cup \{0\} $
	
	$ \Rightarrow  F(d(T^{n+1}x,T^{n}x), \varphi(T^{n+1}x),\varphi(T^{n}x)) \preceq \dfrac{k}{1-k} F(d(T^{n}x,T^{n-1}x) , \varphi(T^{n}x),\varphi(T^{n-1}x))$
	\begin{center}
		$ \Rightarrow F(d(T^{n+1}x,T^{n}x), \varphi(T^{n+1}x),\varphi(T^{n}x))  \preceq (\dfrac{k}{1-k})^{n} F(d(Tx,x),\varphi(Tx), \varphi(x)) $ $ n \in \mathbb{N} \cup \{0\} $
	\end{center}
	By $ F1 $ we have 
	
	$ max\{d(T^{n+1}x,T^{n}x),\varphi(T^{n+1}x)\} \preceq( \dfrac{k}{1-k})^{n} F(d(Tx,x),\varphi(Tx), \varphi(x)) $ $ n \in \mathbb{N} \cup \{0\}  $ $ (22)$
	
	$ \Rightarrow d(T^{n+1}x,T^{n}x)\preceq (\dfrac{k}{1-k})^{n} F(d(Tx,x),\varphi(Tx), \varphi(x)) $$ n \in \mathbb{N} \cup \{0\}  $
	
	Since $ k\in (0,\dfrac{1}{2}) $ which implies that $ \{T^{n}x\}\ $ is a Cauchy sequence. By the completeness of $ (X,\mathbb{A},d )$, there exists a $ z\in X $ such 
	
	$ lim_{n\rightarrow \infty}d(T^{n}x,z)= \theta $ $ (23) $
	
	From $ (23) $ we have $ lim_{n\rightarrow \infty}\varphi(T^{n+1}x) = \theta$ $ (24) $
	
	Using that $ \varphi  $ is lower semi -continuous, and  from $ (23) $ and $ (24) $ we obtain $ \varphi(z)= \theta $
	
	Then
	
	$ F(d(T^{n+1}x,Tz), \varphi(T^{n+1}x),\varphi(Tz)) \preceq k[ F(d(T^{n+1}x,T^{n}x),\varphi(T^{n+1}x),\varphi(T^{n}x))+F(d(Tz,z),\varphi(Tz),\varphi(z))$  , $ n\in \mathbb{N} \cup \{0\} $. $ (25) $
	
	Letting $ n\rightarrow \infty $ in $ (25) $ , using $  (23)$ , $ (24)  $ , $ F2 $ and the continuity of $ F $ we have 
	
	$(1-k) F(d(Tz,z),\theta, \varphi (Tz)) \preceq k F(\theta,\theta,\theta)= \theta $.
	
	From $ F1 $ we obtain $ d(z,Tz)=\theta $ i.e $ z $ is a $ \varphi- $ fixed point of $ T $.
	
	Let $ v \in X $ another $ \varphi- $ fixed point of $ T $.
	
	Putting $ x=z $ and $ y= v $ in $ (21) $ we have
	
	\begin{center}
		$ F(d(z,v),\theta, \theta)) \preceq k F(\theta,\theta,\theta) $  
		
		$ \Rightarrow d(z,v)= \theta$
		
		$ \Rightarrow z=v $    so we get $ (ii) $
	\end{center}	
	\end{proof}
\begin{remark} If we take  $ \varphi\equiv \theta $ and $ F(a,b,c)= a+b+c $ in Theorem $ 3.14 $ we obtain Kannan's fixed point on  $ C^{\ast} $ algebra valued metric space.
	 \end{remark}
\begin{theorem} Let $ (X,\mathbb{A},d) $ be a complete  $C^{\ast}$-algebra valued metric space, $ \varphi :X \rightarrow \mathbb{A}^{+} $ be a function, and $ F \in \mathcal{F} $. Suppose that the following condictions hold:
	\begin{itemize}
		\item [(a)] $ \varphi $ is lower semi-continous
		\item[(b)] $ T: X \rightarrow X $ satisfying 
		
		$ F(d(Tx,Ty),\varphi(Tx),\varphi(Ty)) \preceq \alpha F(d(x,y),\varphi(x),\varphi(y)) + \beta F(d(x,Tx),\varphi(x),\varphi(Tx)) +\gamma F(d(y,Ty),\varphi(y),\varphi(Ty))$ 
		
		 $ \alpha,\beta,\gamma \in[0,\infty) $  with $ \alpha +\beta +\gamma <1 $ and $ \forall x,y \in X $ $ (26) $
		\end{itemize}
	Then 
	\begin{itemize}
		\item [(i)] $ F_{T} \subseteq Z_{\varphi} $
		\item[(ii)] $ T $ is a $ \varphi- $ Picard operator
	\end{itemize}
	\end{theorem}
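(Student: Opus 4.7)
The proof will follow the same template as Theorems 3.7, 3.11 and 3.14, with the main novelty being the Reich-type three-parameter bound that must be rearranged into a geometric contraction.

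Step 1 (Part (i)). Take $u\in F_T$, so $Tu=u$. Substituting $x=y=u$ into (26) gives
\[
F(\theta,\varphi(u),\varphi(u))\preceq(\alpha+\beta+\gamma)\,F(\theta,\varphi(u),\varphi(u)).
\]
Because $\alpha+\beta+\gamma<1$, this forces $F(\theta,\varphi(u),\varphi(u))=\theta$, and then (F1) gives $\varphi(u)\preceq\theta$, so $\varphi(u)=\theta$. Hence $F_T\subseteq Z_\varphi$.

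Step 2 (Cauchy estimate). Fix $x\in X$ and set $x_n=T^nx$. Applying (26) with $(x,y)=(x_{n-1},x_n)$ and grouping the $x_n$--terms on the left yields
\[
(1-\gamma)\,F(d(x_n,x_{n+1}),\varphi(x_n),\varphi(x_{n+1}))\preceq(\alpha+\beta)\,F(d(x_{n-1},x_n),\varphi(x_{n-1}),\varphi(x_n)).
\]
Setting $\lambda=(\alpha+\beta)/(1-\gamma)\in(0,1)$ and iterating gives
\[
F(d(x_n,x_{n+1}),\varphi(x_n),\varphi(x_{n+1}))\preceq\lambda^{n}\,F(d(x_0,x_1),\varphi(x_0),\varphi(x_1)).
\]
By (F1) we extract both $d(x_n,x_{n+1})\preceq\lambda^{n}M$ and $\varphi(x_{n+1})\preceq\lambda^{n}M$, where $M=F(d(Tx,x),\varphi(x),\varphi(Tx))$. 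A standard telescoping via the triangle inequality then yields $d(x_n,x_{n+m})\preceq\frac{\lambda^n(1-\lambda^m)}{1-\lambda}M$, so $\{x_n\}$ is Cauchy. By completeness, $x_n\to z$ for some $z\in X$, and $\varphi(x_n)\to\theta$.

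Step 3 (Fixed point and $\varphi(z)=\theta$). The lower semicontinuity of $\varphi$ combined with $\varphi(x_n)\to\theta$ forces $\varphi(z)=\theta$. To show $Tz=z$, apply (26) with $x=x_n$, $y=z$, and let $n\to\infty$: using continuity of $F$, (F2), $d(x_n,z)\to\theta$, $d(x_n,x_{n+1})\to\theta$ and $\varphi(x_n),\varphi(z)\to\theta$, the right-hand side collapses to $\gamma F(d(z,Tz),\theta,\varphi(Tz))$, giving
\[
(1-\gamma)\,F(d(z,Tz),\theta,\varphi(Tz))\preceq\theta.
\]
Since $\gamma<1$, (F1) forces $d(z,Tz)=\theta$, so $Tz=z$.

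Step 4 (Uniqueness and conclusion (ii)). If $v$ is another $\varphi$-fixed point, then $\varphi(v)=\varphi(z)=\theta$ and the $\beta$- and $\gamma$-terms in (26) vanish by (F2), leaving
\[
F(d(z,v),\theta,\theta)\preceq\alpha F(d(z,v),\theta,\theta),
\]
so $d(z,v)=\theta$, i.e.\ $z=v$. Because $z$ is independent of the starting point $x$ and $T^nx\to z$ for every $x$, $T$ is a $\varphi$-Picard operator, which is (ii).

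The main obstacle is the algebraic rearrangement in Step 2: one must notice that the Reich-type bound contains $d(Ty,y)$ on the right, which after the substitution $y=x_n$ becomes $d(x_{n+1},x_n)$ and must be absorbed into the left-hand side, producing the contraction constant $\lambda=(\alpha+\beta)/(1-\gamma)$. After that, every further step is a direct adaptation of the arguments in Theorems 3.7 and 3.14, with care taken to use (F2) to kill the $\beta$- and $\gamma$-terms in the uniqueness argument once both fixed points are known to lie in $Z_\varphi$.
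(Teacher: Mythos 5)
Your proof is correct and follows essentially the same route as the paper: the same fixed-point substitution $x=y=u$ for (i), the same rearrangement of the Reich-type inequality into a geometric contraction for the orbit, and the same limit argument plus uniqueness step for (ii). The only (immaterial) difference is that you substitute $(x,y)=(T^{n-1}x,T^{n}x)$ and absorb the $\gamma$-term, obtaining the ratio $\frac{\alpha+\beta}{1-\gamma}$, whereas the paper makes the mirror substitution and absorbs the $\beta$-term to get $\frac{\alpha+\gamma}{1-\beta}$; both constants lie in $(0,1)$ since $\alpha+\beta+\gamma<1$.
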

\begin{proof}Let $ u\in X $ be a fixed point of $ T $. Using $ (26) $  with $ x= y=u $ we obtain
	
	$ F(\theta,\varphi(u),\varphi(u)) \preceq \alpha F(\theta, \varphi(u),\varphi(u))+\beta( F(\theta,\varphi(u),\varphi(u))+\gamma F(\theta,\varphi(u),\varphi(u))$ 
	\begin{center}	
		$= (\alpha+\beta+\gamma) F(\theta,\varphi(u),\varphi(u)) $
		
		$ \prec F(\theta,\varphi(u),\varphi(u)) $
		
		$ \Rightarrow F(\theta, \varphi(u),\varphi(u))= \theta $  $ (27) $
	\end{center}
	From $ (F1) $ we have
	
	$ \varphi(u) \preceq F(\theta ,\varphi(u),\varphi(u)) $  $ (28) $
	
	By $ (27) $ and $ (28) $ we obtain $ \varphi(u)=\theta $ , which proves $ (i) $
	
	Let $ x\in X $ by $ (26) $ we have 
	
	$ F(d(T^{n}x,T^{n+1}x), \varphi(T^{n}x),\varphi(T^{n+1}x)) \preceq \alpha F(d(T^{n-1}x,T^{n}x),\varphi(T^{n-1}x),\varphi(T^{n}x)) +\beta (F( d(T^{n}x, T^{n+1}), \varphi(T^{n}x),\varphi(T^{n+1}x)))+\gamma F(d(T^{n-1}x,T^{n}x)), \varphi(T^{n-1}x),\varphi(T^{n}x))$ 
	
	$ n \in \mathbb{N} \cup \{0\} $
	\begin{center}
		$ \Rightarrow F(d(T^{n}x,T^{n+1}x), \varphi(T^{n+1}x),\varphi(T^{n}x))  \preceq (\dfrac{\alpha+\gamma}{1-\beta})^{n} F(d(x,Tx),\varphi(x), \varphi(Tx)) $ $ n \in \mathbb{N} \cup \{0\} $
	\end{center}
	By $ F1 $ we have 
	
	$ max\{d(T^{n+1}x,T^{n}x),\varphi(T^{n+1}x)\} \preceq(\dfrac{\alpha+\gamma}{1-\beta})^{n} F(d(Tx,x),\varphi(Tx), \varphi(x)) $ $ n \in \mathbb{N} \cup \{0\}  $ $ (29) $
	
	$ \Rightarrow d(T^{n+1}x,T^{n}x)\preceq (\dfrac{\alpha+\gamma}{1-\beta})^{n} F(d(Tx,x),\varphi(Tx), \varphi(x)) $$ n \in \mathbb{N} \cup \{0\}  $
	
	Since $ \dfrac{\alpha+\gamma}{1-\beta}\in (0,1) $ which implies that $ \{T^{n}x\} $ is a Cauchy sequence. By the completeness of $ (X,\mathbb{A},d )$, there exists a $ z\in X $ such 
	
	$ lim_{n\rightarrow \infty}d(T^{n}x,z)= \theta $ $ (30) $
	
	From $ (19) $ we have $ lim_{n\rightarrow \infty}\varphi(T^{n+1}x) = \theta$ $ (31) $
	
	Using that $ \varphi  $ is lower semi -continuous, and  from $ (30) $ and $ (31) $ we obtain $ \varphi(z)= \theta $
	
	Then
	
	$ F(d(T^{n+1}x,Tz), \varphi(T^{n+1}x),\varphi(Tz)) \preceq k F(d(T^{n}x,z),\varphi(T^{n}x),\varphi(z))$  , $ n\in \mathbb{N} \cup \{0\} $ $ (32) $
	
	Letting $ n\rightarrow \infty $ in $ (32) $ , using $  (30)$ , $ (31)  $ , $ F2 $ and the continuity of $ F $ we have 
	
	$ F(d(z,Tz),\theta, \varphi (Tz)) \preceq \dfrac{\alpha+\gamma}{1-\beta} F(\theta,\theta,\theta)= \theta $.
	
	From $ F1 $ we obtain $ d(z,Tz)=\theta $ i.e $ z $ is a $ \varphi- $ fixed point of $ T $.
	
	Let $ v \in X $ another $ \varphi- $ fixed point of $ T $.
	
	Putting $ x=z $ and $ y= v $ in $ (26) $ we have
	\begin{center}
		$ F(d(z,v),\theta, \theta)) \preceq \alpha F(d(z,v),\theta,\theta)$  
		
		$ \Rightarrow d(z,v)= \theta$
		
			$ \Rightarrow z=v $    so we get $ (ii) $
	\end{center}	
	\end{proof}
\begin{remark} Taking $ \varphi\equiv \theta $ and $ F(a,b,c)= a+b+c $ in Theorem $ 3.17 $ we obtain Reich's fixed point theorem. 
\end{remark}
\begin{theorem} Let $ (X,\mathbb{A},d) $ be a complete  $C^{\ast}$-algebra valued metric space,
	
	$ \varphi :X \rightarrow \mathbb{A}^{+} $ be a function, and $ F \in \mathcal{F} $.
	
	Suppose that the following condictions hold:
	\begin{itemize}
		\item [(a)] $ \varphi $ is lower semi-continous
		\item[(b)] $ T: X \rightarrow X $ satisfying 
		
		$ F(d(Tx,Ty),\varphi(Tx),\varphi(Ty)) \preceq k[ F(d(x,Ty),\varphi(x),\varphi(Ty))-F(\theta,\varphi(x),\varphi(Ty)) +F(d(y,Tx),\varphi(y),\varphi(Tx))] $  $ k\in (0,\frac{1}{2})$ $ \forall x,y \in X $ $ (33) $
		\end{itemize}
	Then 
	\begin{itemize}
		\item [(i)] $ F_{T} \subseteq Z_{\varphi} $
		\item[(ii)] $ T $ is $ \varphi- $ Picard operator
	 	\end{itemize}
	\end{theorem}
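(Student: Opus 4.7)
The plan is to mirror the template of Theorems 3.14 and 3.17, adapting it to the Chatterjea-type inequality $(33)$. For part $(i)$, take $u\in F_T$ and substitute $x=y=u$ in $(33)$: since $Tu=u$ and $d(u,u)=\theta$, the bracket on the right collapses (the term $F(\theta,\varphi(u),\varphi(u))$ cancels the subtracted piece) to $F(\theta,\varphi(u),\varphi(u))$, yielding $F(\theta,\varphi(u),\varphi(u))\preceq k\,F(\theta,\varphi(u),\varphi(u))$ with $k<\tfrac12<1$. Hence $F(\theta,\varphi(u),\varphi(u))=\theta$, and combining with $(F1)$, which gives $\varphi(u)\preceq F(\theta,\varphi(u),\varphi(u))$, one concludes $\varphi(u)=\theta$, so $F_T\subseteq Z_\varphi$.

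For part $(ii)$, fix $x\in X$ and set $x_n=T^n x$. Applying $(33)$ with $(x,y)=(x_n,x_{n-1})$, the identity $d(x_n,Tx_n)=\theta$ makes the first summand of the bracket equal to $F(\theta,\varphi(x_n),\varphi(x_n))$, which cancels the subtracted $F(\theta,\cdot,\cdot)$, leaving only $F(d(x_{n-1},x_{n+1}),\varphi(x_{n-1}),\varphi(x_{n+1}))$. Using the metric triangle inequality $d(x_{n-1},x_{n+1})\preceq d(x_{n-1},x_n)+d(x_n,x_{n+1})$ together with the implicit monotonicity of $F$ in its first slot (as tacitly used in the earlier proofs of this paper), a rearrangement produces
\[
F(d(x_{n+1},x_n),\varphi(x_{n+1}),\varphi(x_n))\preceq \frac{k}{1-k}\,F(d(x_n,x_{n-1}),\varphi(x_n),\varphi(x_{n-1})),
\]
which iterates to $(k/(1-k))^n F(d(Tx,x),\varphi(Tx),\varphi(x))$, a genuine geometric decay since $k<\tfrac12$. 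Applying $(F1)$ then controls both $d(x_{n+1},x_n)$ and $\varphi(x_{n+1})$ by the same quantity, so $\{x_n\}$ is Cauchy; completeness yields a limit $z\in X$, and $\varphi(x_n)\to\theta$ together with lower semi-continuity of $\varphi$ forces $\varphi(z)=\theta$.

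To identify $z$ as a $\varphi$-fixed point, substitute $(x,y)=(x_n,z)$ into $(33)$ and pass to the limit, using continuity of $F$, $\varphi(z)=\theta$, and $(F2)$, to obtain $F(d(z,Tz),\theta,\varphi(Tz))\preceq\theta$; $(F1)$ then forces $d(z,Tz)=\theta$, i.e.\ $Tz=z$. For uniqueness, suppose $v$ is another $\varphi$-fixed point and substitute $(x,y)=(z,v)$ into $(33)$: using $Tz=z$, $Tv=v$, $\varphi(z)=\varphi(v)=\theta$, $(F2)$ and symmetry of $d$, the right-hand side reduces to $2k\,F(d(z,v),\theta,\theta)$, and $2k<1$ forces $F(d(z,v),\theta,\theta)=\theta$, so $d(z,v)=\theta$ by $(F1)$ and $z=v$.

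The main obstacle I expect is the Cauchy estimate: the Chatterjea form injects $d(x_{n-1},x_{n+1})$ inside $F$, whereas the metric triangle inequality only controls that distance from outside. Extracting the contractive ratio $k/(1-k)$ cleanly therefore relies on monotonicity of $F$ in its first argument, a property tacitly used throughout the paper but not formally listed among $(F1)$--$(F3)$; I would flag this as a structural assumption needed for the Cauchy argument, exactly as in the proof of Theorem 3.14.
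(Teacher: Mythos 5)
Your proposal follows the same overall route as the paper's proof (same substitutions $x=y=u$ for part (i), $(x,y)=(T^nx,T^{n-1}x)$ for the iteration, $(x,y)=(T^nx,z)$ for the limit step, $(x,y)=(z,v)$ for uniqueness), but you handle the one genuinely delicate step differently and more honestly. After the cancellation the paper arrives at $F(d(T^{n+1}x,T^nx),\varphi(T^{n+1}x),\varphi(T^nx))\preceq k\,F(d(T^{n-1}x,T^{n+1}x),\varphi(T^{n-1}x),\varphi(T^{n+1}x))$ and then simply asserts the bound $k^nF(d(Tx,x),\varphi(Tx),\varphi(x))$ with no intermediate argument; this does not follow as written, since the right-hand side involves the two-step distance $d(T^{n-1}x,T^{n+1}x)$. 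Your version supplies the standard Chatterjea rearrangement via the triangle inequality, yielding the ratio $k/(1-k)<1$ and hence the decay $(k/(1-k))^n$, which is what the Cauchy argument actually needs. You are also right to flag that this rearrangement requires monotonicity (and in fact a form of subadditivity) of $F$ in its first argument, which is not among $(F1)$--$(F3)$; the paper tacitly needs the same (or a stronger) assumption and never states it, so your explicit caveat is a genuine improvement rather than a defect. Likewise, in the uniqueness step your computation giving $F(d(z,v),\theta,\theta)\preceq 2k\,F(d(z,v),\theta,\theta)$ and concluding from $2k<1$ is the correct one; the paper's displayed inequality $F(d(z,v),\theta,\theta)\preceq kF(\theta,\theta,\theta)$ at that point is garbled. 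The only places where you are slightly loose are (a) the limit identification step, where the term $F(d(T^nx,Tz),\ldots)$ converges to $F(d(z,Tz),\theta,\varphi(Tz))$ and must be absorbed into the left-hand side, producing a factor $(1-k)$ before you can conclude $F(d(z,Tz),\theta,\varphi(Tz))=\theta$ (the paper does record this factor), and (b) the precise algebraic form of the subadditivity you invoke, which you describe only as ``a rearrangement''; spelling out the required inequality $F(a+b,c,e)\preceq F(a,\cdot,\cdot)+F(b,\cdot,\cdot)$ would make the dependence on the unstated hypothesis fully explicit.
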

\begin{proof}Let $ u\in X $ be a fixed point of $ T $. Applying $ (33) $  with $ x=y=u $,
	
	 we obtain
	
	$ F(\theta,\varphi(u),\varphi(u)) \preceq k[ F(\theta, \varphi(u),\varphi(u))-F(\theta, \varphi(u),\varphi(u))+ F(\theta, \varphi(u),\varphi(u))]$ $ (k\in (0,\dfrac{1}{2})) $
	
	$ \Rightarrow F(\theta, \varphi(u),\varphi(u))= \theta $  
	
	From $ (F1) $ we have	$ \varphi(u) \preceq F(\theta ,\varphi(u),\varphi(u)) $ then $ \varphi(u)=\theta $.
	
		Let $ x\in X $ be an arbitrary point, we have 
	
	$ F(d(T^{n+1}x,T^{n}x), \varphi(T^{n+1}x),\varphi(T^{n}x)) $ 
	
	 $\preceq k [ F(d(T^{n}x,T^{n}x),\varphi(T^{n}x),\varphi(T^{n}x))-F(\theta, \varphi(T^{n}x),\varphi(T^{n}x)) $ 
	 
	  $+ F(d(T^{n-1}x,T^{n+1}x) , \varphi(T^{n-1}x),\varphi(T^{n+1}x))]$ 
		$ n \in \mathbb{N} \cup \{0\} $
	
	$ F(d(T^{n+1}x,T^{n}x), \varphi(T^{n+1}x),\varphi(T^{n}x))$ 
	
	 $ \preceq k [  F(d(T^{n-1}x,T^{n+1}x) , \varphi(T^{n-1}x),\varphi(T^{n+1}x))]$ 
	\begin{center}
		$ \Rightarrow F(d(T^{n+1}x,T^{n}x), \varphi(T^{n+1}x),\varphi(T^{n}x))  \preceq k^{n} F(d(Tx,x),\varphi(Tx), \varphi(x)) $ $ n \in \mathbb{N} \cup \{0\} $
	\end{center}
	By $ F1 $ we have 
	
	$ max\{d(T^{n+1}x,T^{n}x),\varphi(T^{n+1}x)\} \preceq k^{n} F(d(Tx,x),\varphi(Tx), \varphi(x)) $ $ n \in \mathbb{N} \cup \{0\}  $ $ (34)$
	
	$ \Rightarrow d(T^{n+1}x,T^{n}x)\preceq k^{n} F(d(Tx,x),\varphi(Tx), \varphi(x)) $$ n \in \mathbb{N} \cup \{0\}  $
	
	Since $ k\in (0,\dfrac{1}{2}) $ wich implies that $ \{T^{n}x\}\ $ is a Cauchy sequence. By the completeness of $ (X,\mathbb{A},d )$, there exists a $ z\in X $ such 
	
	$ lim_{n\rightarrow \infty}d(T^{n}x,z)= \theta $ $ (35) $
	
	From $ (35) $ we have $ lim_{n\rightarrow \infty}\varphi(T^{n+1}x) = \theta$ $ (36) $
	
	Using that $ \varphi  $ is lower semi-continuous, and  from $ (35) $ and $ (36) $ we obtain $ \varphi(z)= \theta $
	
	Then
	
	$ F(d(T^{n+1}x,Tz), \varphi(T^{n+1}x),\varphi(Tz)) \preceq k[ F(d(T^{n+1}x,T^{n}x),\varphi(T^{n+1}x),\varphi(T^{n}x))+F(d(Tz,z),\varphi(Tz),\varphi(z))$  , $ n\in \mathbb{N} \cup \{0\} $ $ (37) $
	
	Letting $ n\rightarrow \infty $ in $ (37) $ , using $  (35)$ , $ (36)  $ , $ F2 $ and the continuity of $ F $ we have 
	
	$(1-k) F(d(Tz,z),\theta, \varphi (Tz)) \preceq k F(\theta,\theta,\theta)= \theta $.
	
	From $ F1 $ we obtain $ d(z,Tz)=\theta $ i.e $ z $ is a $ \varphi- $ fixed point of $ T $.
	
	Let $ v \in X $ another $ \varphi- $ fixed point of $ T $.
	
	Putting $ x=z $ and $ y= v $ in $ (33) $ we have
	\begin{center}
		$ F(d(z,v),\theta, \theta)) \preceq k F(\theta,\theta,\theta) $  
		
		$ \Rightarrow d(z,v)= \theta$
		
		$ \Rightarrow z=v $    so we get $ (ii) $
	\end{center}	
\end{proof}
\begin{remark} Taking $ \varphi\equiv\theta $ and $ F(a,b,c)= a+b+c $ in Theorem $ 3.19 $ we obtain Chatterjea's fixed point theorem. 
\end{remark}
\section{Applications}
We deduce in this section some fixed point theorems in $ C^{\ast} $ algebra valued partial metric spaces.

If we consider the $ C^{\ast} $ algebra valued metric $ p^{s} : X \times X \rightarrow \mathbb{A} $ defined by

$ p^{s}(x,y)= 2p(x,y)-p(x,x) -p(y,y)$  and the function $ \varphi: X \rightarrow \mathbb{A} $ defined by 

$ \varphi(x)= p(x,x) $ with $ F(a,b,c)= a+b+c $ $ \forall a,b,c \in \mathbb{A} $

From Theorem $ 3.7 $ we obtain the following result.
\begin{corollary} Let $ (X,\mathbb{A},p) $ be a complete $C^{\ast}$-algebra valued partial metric space and $ T:X\times X \rightarrow $ be a mapping such that 
	\begin{center}
	$ p(Tx,Ty)\preceq k p(x,y) $ , $\forall x,y \in X$,  $ k\in (0,1)$
	\end{center}
Then $ T $ has a unique fixed point $ u \in X $, such that $ p(u,u)= \theta $.
\end{corollary}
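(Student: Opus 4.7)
The plan is to deduce this corollary directly from Theorem~3.7 applied to the derived $C^{\ast}$-algebra valued metric $p^{s}(x,y)=2p(x,y)-p(x,x)-p(y,y)$, with the auxiliary function $\varphi(x):=p(x,x)$ and the choice $F(a,b,c):=a+b+c$ already highlighted before the corollary. The preparatory bookkeeping is short: $(X,\mathbb{A},p^{s})$ is complete by Lemma~2.6, $F\in\mathcal{F}$ by Example~3.5, and $\varphi$ takes values in $\mathbb{A}_{+}$ by axiom~(i) of the definition of a partial metric.

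Next I would verify that $\varphi$ is lower semi-continuous with respect to $p^{s}$. If $x_{n}\to x$ in $p^{s}$, the last equivalence in Lemma~2.6 gives $p(x_{n},x)-p(x_{n},x_{n})\to\theta$ and $p(x_{n},x)-p(x,x)\to\theta$, hence $\varphi(x_{n})=p(x_{n},x_{n})\to p(x,x)=\varphi(x)$. So $\varphi$ is in fact continuous, which is more than needed.

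The heart of the argument is a one-line algebraic simplification. With the choices above,
\[ F(p^{s}(Tx,Ty),\varphi(Tx),\varphi(Ty))=2p(Tx,Ty),\qquad F(p^{s}(x,y),\varphi(x),\varphi(y))=2p(x,y), \]
so the contractive hypothesis $p(Tx,Ty)\preceq k\,p(x,y)$ is literally the $(F,\varphi)$-contraction condition~(1) for $p^{s}$ with the same constant $k\in(0,1)$. Theorem~3.7 then furnishes a unique element $u\in F_{T}\cap Z_{\varphi}$ attracting every orbit $T^{n}x$; in the original language of $p$, this means $Tu=u$ and $p(u,u)=\varphi(u)=\theta$.

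Finally I would promote the uniqueness from ``$\varphi$-fixed point'' to ``fixed point'' by invoking Theorem~3.7(i): $F_{T}\subseteq Z_{\varphi}$, so any other fixed point $v$ automatically satisfies $p(v,v)=\theta$ and hence lies in $F_{T}\cap Z_{\varphi}=\{u\}$. I do not anticipate a genuine obstacle here; the only point requiring a moment of care is the semi-continuity check for $\varphi$, and that reduces to a direct appeal to Lemma~2.6.
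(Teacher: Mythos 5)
Your proposal is correct and follows exactly the route the paper intends: specialize Theorem~3.7 to the metric $p^{s}(x,y)=2p(x,y)-p(x,x)-p(y,y)$ with $\varphi(x)=p(x,x)$ and $F(a,b,c)=a+b+c$, so that both sides of the contraction condition collapse to $2p(Tx,Ty)\preceq 2k\,p(x,y)$. The paper states this derivation only in outline, and your verification of completeness, the continuity of $\varphi$ via Lemma~2.6, and the upgrade of uniqueness through $F_{T}\subseteq Z_{\varphi}$ supplies precisely the omitted details.
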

From Theorem $ 3.11 $ we have the following result.
\begin{corollary} Let $ (X,\mathbb{A},p) $ be a complete  $C^{\ast}$-algebra valued partial metric space and $ T:X\times X \rightarrow $ be a mapping such that 
	\begin{center}
		$ p(T^{2}x,Tx)\preceq k p(Tx,x) $ , $\forall x \in X$ ,  $ k\in (0,1)$
	\end{center}
	Then $ T $ has a unique fixed point $ u \in X $ , such that $ p(u,u)= \theta $.
\end{corollary}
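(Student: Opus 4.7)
The plan is to deduce the corollary directly from Theorem~3.11 by the substitution $d := p^s$, $\varphi(x) := p(x,x)$, and $F(a,b,c) := a + b + c$, which is exactly the recipe announced at the beginning of Section~4. By Lemma~2.5, completeness of $(X, \mathbb{A}, p)$ is equivalent to completeness of $(X, \mathbb{A}, p^s)$, so the ambient space is of the type required by Theorem~3.11, and $F \in \mathcal{F}$ is immediate: it is continuous, sends $(\theta, \theta, \theta)$ to $\theta$, and satisfies $\max\{a,b\} \preceq a+b+c$ on $\mathbb{A}_+^3$.

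The key observation is the algebraic identity
\begin{equation*}
F\bigl(p^s(u,v), \varphi(u), \varphi(v)\bigr) \;=\; p^s(u,v) + p(u,u) + p(v,v) \;=\; 2\,p(u,v), \qquad u, v \in X,
\end{equation*}
which is just the definition of $p^s$ rearranged. Plugging $(u,v) = (T^2 x, Tx)$ and $(u,v) = (Tx, x)$ into it, the graphic $(F, \varphi)$-contraction hypothesis (b) of Theorem~3.11 becomes $2\,p(T^2 x, Tx) \preceq 2k\,p(Tx, x)$, which is exactly the hypothesis of the corollary. Thus hypothesis~(b) of Theorem~3.11 holds verbatim.

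For the remaining hypotheses, lower semi-continuity (in fact continuity) of $\varphi(x) = p(x,x)$ with respect to $p^s$ follows from Lemma~2.6 by taking $y_n = x_n$, so hypothesis~(a) is free. The continuity of $T$, required by hypothesis~(c) of Theorem~3.11, is where I expect the main obstacle: it is not stated in the corollary, so either it must be read as an implicit assumption, or one must recover it from the partial metric structure (for instance, by combining the graphic contraction with the triangle inequality~(iv) of the partial metric). Granting this, Theorem~3.11(i)--(ii) delivers a $\varphi$-fixed point $z$ of $T$, i.e.\ $Tz = z$ with $p(z,z) = \varphi(z) = \theta$, and the uniqueness assertion is inherited from the uniqueness step performed at the end of the proof of Theorem~3.11.
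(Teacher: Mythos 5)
Your proposal follows exactly the route the paper takes: the paper offers no argument for this corollary beyond the substitution $d=p^{s}$, $\varphi(x)=p(x,x)$, $F(a,b,c)=a+b+c$ announced at the head of Section~4, and your identity $F\bigl(p^{s}(u,v),\varphi(u),\varphi(v)\bigr)=2p(u,v)$ is precisely the computation that makes the reduction of the hypothesis to a graphic $(F,\varphi)$-contraction work. The one place where you go beyond the paper is in flagging the continuity of $T$, and you are right to do so: hypothesis (c) of Theorem~3.11 is nowhere supplied by the corollary, and this is a defect of the statement rather than of your argument --- a discontinuous graphic contraction need not have a fixed point at all (already in the ordinary metric case $X=[0,1]$, $Tx=x/2$ for $x>0$ and $T0=1$ satisfies $d(T^{2}x,Tx)\preceq\frac{1}{2}\,d(Tx,x)$ and is fixed-point free), so continuity must be read as an implicit hypothesis carried over from the theorem. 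The same caveat applies to uniqueness: Theorem~3.11 only asserts that $T$ is a \emph{weakly} $\varphi$-Picard operator, and the ``uniqueness step'' at the end of its proof substitutes two distinct points into a one-variable graphic condition, so the uniqueness claim of the corollary is not actually delivered by the theorem you (and the paper) invoke.
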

similary, from Theorem $ 3.13 $ we obtain the following result.
\begin{corollary} Let $ (X,\mathbb{A},p) $ be a complete $C^{\ast}$-algebra valued partial metric space and $ T:X\times X \rightarrow $ be a mapping such that 
	\begin{center}
		$ p(Tx,Ty)\preceq k p(x,y) +\alpha (p(y,Tx)-\dfrac{p(y,y)+p(Tx,Tx)}{2})$ , $\forall x,y \in X$ ,  $ k\in (0,1)$ and $ \alpha \geq 0 $
	\end{center}
	Then $ T $ has a unique fixed point $ u \in X $ , such that $ p(u,u)= \theta $.
\end{corollary}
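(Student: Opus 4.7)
The plan is to deduce Corollary 4.3 directly from Theorem 3.13 by applying the standard dictionary that converts a partial metric into an associated $C^{\ast}$-algebra valued metric. Specifically, I would set $d(x,y) = p^{s}(x,y) = 2p(x,y) - p(x,x) - p(y,y)$, let $\varphi: X \to \mathbb{A}_{+}$ be defined by $\varphi(x) = p(x,x)$, and take $F(a,b,c) = a+b+c$. By Lemma 2.4, completeness of $(X,\mathbb{A},p)$ transfers to completeness of $(X,\mathbb{A},d)$, and $F$ clearly lies in $\mathcal{F}$ (conditions (F1)--(F3) are immediate since $a,b,c \succeq \theta$).

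Next I would verify the lower semi-continuity of $\varphi$ with respect to the topology of $d = p^{s}$. If $x_n \to x$ in $(X,\mathbb{A},p)$, Lemma 2.4 part (2) gives $\lim p(x_n,x_n) = p(x,x)$, so $\varphi(x_n) \to \varphi(x)$; in particular $\varphi$ is lower semi-continuous, so hypothesis (a) of Theorem 3.13 is satisfied.

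The central step is to show that the hypothesis of the corollary is precisely the $(F,\varphi)$-weak contraction condition (15) of Definition 3.12. With the chosen $F$ and $\varphi$, a routine algebraic identity yields
\begin{equation*}
F(d(Tx,Ty),\varphi(Tx),\varphi(Ty)) = d(Tx,Ty) + p(Tx,Tx) + p(Ty,Ty) = 2p(Tx,Ty),
\end{equation*}
and similarly $F(d(x,y),\varphi(x),\varphi(y)) = 2p(x,y)$, $F(d(y,Tx),\varphi(y),\varphi(Tx)) = 2p(y,Tx)$, and $F(\theta,\varphi(y),\varphi(Tx)) = p(y,y)+p(Tx,Tx)$. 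Substituting these into (15) and dividing by $2$ produces exactly the inequality assumed in Corollary 4.3.

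Having verified (a) and (b) of Theorem 3.13, I would invoke that theorem to conclude that $T$ admits a unique $\varphi$-fixed point $u \in X$, i.e.\ $Tu = u$ and $\varphi(u) = p(u,u) = \theta$, giving both the existence/uniqueness of a fixed point and the asserted identity $p(u,u) = \theta$. The only nontrivial step is the algebraic identification above; everything else is immediate from the preliminaries. The mild subtlety to watch out for is that the weak-contraction inequality in Theorem 3.13 carries a factor of $2$ on both sides after substitution, which one must cancel cleanly in order to recover precisely the corollary's hypothesis with the same constants $k$ and $\alpha$.
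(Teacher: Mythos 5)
Your proposal is correct and follows exactly the route the paper intends: Section 4 announces the dictionary $d=p^{s}$, $\varphi(x)=p(x,x)$, $F(a,b,c)=a+b+c$, and the corollary is obtained by substituting into condition (15) of Theorem 3.13, which is precisely the algebraic identification you carry out (note the relevant limit lemma is Lemma 2.6, not 2.4). Your verification of lower semi-continuity of $\varphi$ and the cancellation of the factor $2$ are the only details the paper leaves implicit, and you handle both correctly.
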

From Theorem $3. 15 $ we have the following Kannan's fixed point theorem on $C^{\ast}$-algebra valued partial metric space.
\begin{corollary} Let $ (X,\mathbb{A},p) $ be a complete  $C^{\ast}$-algebra valued partial metric space and $ T:X\times X \rightarrow $ be a mapping such that 
	\begin{center}
		$ p(Tx,Ty)\preceq k [p(x,Tx) + (p(y,Ty)]$ , $\forall x,y \in X$ ,   $ k\in (0,\frac{1}{2})$
	\end{center}
	Then $ T $ has a unique fixed point $ u \in X $ , such that $ p(u,u)= \theta $.
\end{corollary}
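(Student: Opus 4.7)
The plan is to derive this statement directly from Theorem 3.15 (the $(F,\varphi)$-Kannan theorem in $C^{\ast}$-algebra valued metric spaces) by making the standard reduction from a partial metric to its associated metric. Specifically, I would set
\[
 d(x,y):=p^{s}(x,y)=2p(x,y)-p(x,x)-p(y,y),\qquad \varphi(x):=p(x,x),\qquad F(a,b,c):=a+b+c.
\]
With these choices, $F\in\mathcal{F}$ is immediate (F1 holds because every argument is in $\mathbb{A}_{+}$, F2 and F3 are obvious), and $(X,\mathbb{A},p^{s})$ is complete by Lemma 2.5 because $(X,\mathbb{A},p)$ is complete.

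The main computation, and the first key step, is to verify that the Kannan-type inequality for $p$ translates into the $(F,\varphi)$-Kannan inequality (21) of Theorem 3.15 with the same constant $k$. A direct expansion gives
\[
 F(d(Tx,Ty),\varphi(Tx),\varphi(Ty)) \;=\; p^{s}(Tx,Ty)+p(Tx,Tx)+p(Ty,Ty) \;=\; 2p(Tx,Ty),
\]
and, analogously, $F(d(Tx,x),\varphi(Tx),\varphi(x))=2p(Tx,x)$ and $F(d(Ty,y),\varphi(Ty),\varphi(y))=2p(Ty,y)$. Thus the hypothesis $p(Tx,Ty)\preceq k\bigl(p(x,Tx)+p(y,Ty)\bigr)$ is exactly
\[
 F(d(Tx,Ty),\varphi(Tx),\varphi(Ty))\;\preceq\; k\bigl[\,F(d(Tx,x),\varphi(Tx),\varphi(x))+F(d(Ty,y),\varphi(Ty),\varphi(y))\,\bigr],
\]
which is (21).

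The second key step is to check the remaining hypothesis that $\varphi(x)=p(x,x)$ is lower semi-continuous with respect to $p^{s}$. For this I would invoke Lemma 2.6: if $x_{n}\to x$ in $(X,\mathbb{A},p^{s})$ then $p(x_{n},x_{n})-p(x_{n},x)\to -\bigl(p(x,x)-p(x,x)\bigr)=\theta$ and $p(x_{n},x)\to p(x,x)$, so $p(x_{n},x_{n})\to p(x,x)$; this gives continuity (and hence lower semi-continuity) of $\varphi$.

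With these ingredients Theorem 3.15 applies and yields a unique $u\in F_{T}\cap Z_{\varphi}$: uniqueness of the fixed point follows from conclusion (ii) of Theorem 3.15 and the fact $F_{T}\subseteq Z_{\varphi}$, while $p(u,u)=\varphi(u)=\theta$ comes from $u\in Z_{\varphi}$. I expect the translation of the Kannan inequality to be entirely routine; the only point that requires a moment of care is the semi-continuity of $\varphi$, which is why I would explicitly appeal to Lemma 2.6 rather than leave it implicit.
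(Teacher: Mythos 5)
Your proposal is correct and follows exactly the reduction the paper intends: take $d=p^{s}$, $\varphi(x)=p(x,x)$, $F(a,b,c)=a+b+c$, observe that the Kannan hypothesis for $p$ becomes condition (21) for $(p^{s},\varphi,F)$ after multiplying by $2$, and apply the $(F,\varphi)$-Kannan theorem; the paper states this reduction once before the corollaries and gives no further proof, so you have in fact supplied more detail (completeness transfer and lower semi-continuity of $\varphi$) than the paper does. The only blemishes are the off-by-one references to the preliminary lemmas (the completeness equivalence and the limit lemma are Lemma 2.6 and Lemma 2.7 in the paper's numbering), which do not affect the argument.
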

From Theorem $ 3.17 $ we get the following Reich's fixed point theorem on $C^{\ast}$-algebra valued partial metric space.
 \begin{corollary} Let $ (X,\mathbb{A},p) $ be a complete $C^{\ast}$-algebra valued partial metric space and $ T:X\times X \rightarrow $ be a mapping such that 
	\begin{center}
		$ p(Tx,Ty)\preceq \alpha p(x,y)+ \beta  p(x,Tx) + \gamma p(y,Ty)$ , $\forall x,y \in X$ ,  $ \alpha ,\beta , \gamma \in [0, \infty)$  with  $ \alpha +\beta +\gamma < 1 $
	\end{center}
	Then $ T $ has a unique fixed point $ u \in X $ , such that $ p(u,u)= \theta $.
\end{corollary}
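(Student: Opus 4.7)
The plan is to realize this statement as a direct consequence of Theorem 3.17 by the standard reduction from a partial metric to its associated metric. I would choose the auxiliary data exactly as suggested in the paragraph preceding the corollaries, namely the $C^{\ast}$-algebra valued metric $p^{s}(x,y)=2p(x,y)-p(x,x)-p(y,y)$, the function $\varphi(x)=p(x,x)$, and $F(a,b,c)=a+b+c$. The first routine task is to confirm that this data is admissible: $F\in\mathcal{F}$ is immediate (positivity, $F(\theta,\theta,\theta)=\theta$ and continuity are clear, and (F1) follows because adding nonnegative elements preserves $\preceq$), $\varphi$ takes values in $\mathbb{A}_{+}$ by axiom (i) of the $C^{\ast}$-algebra valued partial metric, and $(X,\mathbb{A},p^{s})$ is a complete $C^{\ast}$-algebra valued metric space by Lemma 2.4(2).

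Next I would verify the two hypotheses of Theorem 3.17 in our setting. Lower semi-continuity of $\varphi$: if $x_{n}\to x$ in $(X,\mathbb{A},p^{s})$, Lemma 2.4 gives $p(x_{n},x)-p(x_{n},x_{n})\to\theta$ and $p(x_{n},x)-p(x,x)\to\theta$, whence $\varphi(x_{n})=p(x_{n},x_{n})\to p(x,x)=\varphi(x)$, so $\varphi$ is actually continuous (hence lower semi-continuous). Then I translate the contraction hypothesis: a short algebraic simplification gives
\[
F(p^{s}(Tx,Ty),\varphi(Tx),\varphi(Ty))=p^{s}(Tx,Ty)+p(Tx,Tx)+p(Ty,Ty)=2p(Tx,Ty),
\]
and the three terms on the right of (26) collapse similarly to $2p(x,y)$, $2p(x,Tx)$ and $2p(y,Ty)$. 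Thus the hypothesized inequality $p(Tx,Ty)\preceq \alpha p(x,y)+\beta p(x,Tx)+\gamma p(y,Ty)$ is equivalent, after multiplication by $2$, to condition (26) of Theorem 3.17 with the same constants $\alpha,\beta,\gamma$ (which satisfy $\alpha+\beta+\gamma<1$).

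Finally I apply Theorem 3.17: $T$ is a $\varphi$-Picard operator, so $F_{T}\cap Z_{\varphi}=\{u\}$ for a unique $u\in X$, meaning $Tu=u$ and $\varphi(u)=p(u,u)=\theta$. Uniqueness as a fixed point of $T$ (not just as a $\varphi$-fixed point) follows at once from $F_{T}\subseteq Z_{\varphi}$, the first assertion of Theorem 3.17: any fixed point of $T$ is automatically a zero of $\varphi$, hence equals $u$.

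I do not expect a genuine obstacle. The main conceptual point is recognizing that with the choice $\varphi(x)=p(x,x)$ and $F(a,b,c)=a+b+c$, the combination $F(p^{s}(\cdot,\cdot),\varphi(\cdot),\varphi(\cdot))$ equals $2p(\cdot,\cdot)$, which is exactly what makes the reduction work uniformly for all the corollaries in this section; the only small care needed is checking that convergence in $p^{s}$ forces continuity of $x\mapsto p(x,x)$, which is handled by Lemma 2.4.
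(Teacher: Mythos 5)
Your proposal is correct and follows exactly the reduction the paper itself uses: pass to $p^{s}$, take $\varphi(x)=p(x,x)$ and $F(a,b,c)=a+b+c$, observe that $F(p^{s}(\cdot,\cdot),\varphi(\cdot),\varphi(\cdot))=2p(\cdot,\cdot)$, and invoke Theorem 3.17. The paper states this reduction without verifying the hypotheses, so your write-up is simply a more careful version of the same argument (your check of lower semi-continuity of $\varphi$ via the convergence lemma for $p^{s}$ is a detail the paper omits).
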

From Theorem $3.19 $ , we obtain the following Chatterjea's fixed point theorem on $C^{\ast}$-algebra valued partial metric space. 
\begin{corollary} Let $ (X,\mathbb{A},p) $ be a complete $C^{\ast}$-algebra valued partial metric space and $ T:X\times X \rightarrow $ be a mapping such that 
	\begin{center}
		$ p(Tx,Ty)\preceq k [p(x,Ty) + (p(y,Tx)]$ , $\forall x,y \in X$ ,  $ k\in (0,\frac{1}{2})$
	\end{center}
	Then $ T $ has a unique fixed point $ u \in X $ , such that $ p(u,u)= \theta $.
\end{corollary}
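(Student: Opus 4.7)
The plan is to specialize Theorem 3.19 to the $C^{\ast}$-algebra valued metric $p^{s}$ induced by the partial metric $p$, in exact parallel with the preceding corollaries of this section. Concretely, set $d := p^{s}$ where $p^{s}(x,y)=2p(x,y)-p(x,x)-p(y,y)$, define $\varphi:X\to\mathbb{A}_{+}$ by $\varphi(x):=p(x,x)$, and take $F(a,b,c):=a+b+c$. Three routine preliminaries are then needed before Theorem 3.19 can be invoked: that $F\in\mathcal{F}$ (positivity of the sum in $\mathbb{A}_{+}$ gives (F1), $F(\theta,\theta,\theta)=\theta$ is immediate, and continuity is clear); that $(X,\mathbb{A},p^{s})$ is complete, which is Lemma 2.5(2) applied to the assumed completeness of $(X,\mathbb{A},p)$; and that $\varphi$ is lower semi-continuous with respect to $p^{s}$, which follows from Lemma 2.6 by taking $y_{n}=x_{n}$.

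The core step is verifying hypothesis (33) of Theorem 3.19 for the triple $(p^{s},\varphi,F)$. With $F$ a sum and $\varphi(w)=p(w,w)$, the left-hand side of (33) collapses, after the self-distance terms cancel inside $p^{s}$, to $2p(Tx,Ty)$. On the right, the explicit subtraction of $F(\theta,\varphi(x),\varphi(Ty))=p(x,x)+p(Ty,Ty)$ is designed to cancel the corresponding summands introduced by $\varphi(x)+\varphi(Ty)$ in the first $F$-term, leaving an expression in which the partial-metric inequality $p(w,w)\preceq p(w,z)$ from axiom (iii) is brought to bear in order to compare with the given Chatterjea-type bound $p(Tx,Ty)\preceq k[p(x,Ty)+p(y,Tx)]$.

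Once (33) is secured, Theorem 3.19 delivers both conclusions at once. Part (ii) of that theorem yields a unique $u\in F_{T}\cap Z_{\varphi}$ with $T^{n}x\to u$ for every $x\in X$; translating back through the dictionary, $u$ is a fixed point of $T$ satisfying $p(u,u)=\varphi(u)=\theta$. Uniqueness in $F_{T}$ alone then follows because part (i) of Theorem 3.19 forces every fixed point of $T$ into $Z_{\varphi}$, so any such point already lies in $F_{T}\cap Z_{\varphi}$ and must coincide with $u$.

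The main obstacle I anticipate is the algebraic verification of (33). In $\mathbb{A}_{+}$ one cannot freely rearrange inequalities as in the scalar case, and the residual self-distance terms $p(x,x)+p(Ty,Ty)$ that survive the expansion of $p^{s}$ on both sides must be balanced using only the partial-metric axioms (iii) and (iv) together with positivity in the $C^{\ast}$-algebra. This bookkeeping is where the writer must be most attentive; the rest of the argument is a mechanical specialization of Theorem 3.19 to the partial-metric setting.
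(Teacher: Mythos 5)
Your overall strategy is exactly the one the paper intends: the paper offers no written proof of this corollary beyond the phrase ``From Theorem 3.19, we obtain...'', together with the standing dictionary $d=p^{s}$, $\varphi(x)=p(x,x)$, $F(a,b,c)=a+b+c$ announced at the start of Section 4. Your preliminary reductions (completeness of $(X,\mathbb{A},p^{s})$ via Lemma 2.5, lower semi-continuity of $\varphi$ via Lemma 2.6, $F\in\mathcal{F}$) are all fine and are the same ones the paper tacitly uses.

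However, the one step you explicitly defer --- the verification of hypothesis $(33)$ --- is precisely where the argument breaks down, and it cannot be repaired in the way you suggest. Carrying out the substitution: the left side of $(33)$ is $p^{s}(Tx,Ty)+p(Tx,Tx)+p(Ty,Ty)=2p(Tx,Ty)$, the first two terms on the right give $p^{s}(x,Ty)=2p(x,Ty)-p(x,x)-p(Ty,Ty)$, and the third gives $2p(y,Tx)$. So $(33)$ is equivalent to
\[
p(Tx,Ty)\ \preceq\ k\Bigl[\,p(x,Ty)+p(y,Tx)-\tfrac{1}{2}\bigl(p(x,x)+p(Ty,Ty)\bigr)\Bigr],
\]
which is \emph{strictly stronger} than the corollary's hypothesis $p(Tx,Ty)\preceq k[p(x,Ty)+p(y,Tx)]$, since the subtracted term lies in $\mathbb{A}_{+}$. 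Your hope that axiom (iii) ($p(w,w)\preceq p(w,z)$) will ``balance'' the residual self-distance terms points in the wrong direction: axiom (iii) shows that $\tfrac{1}{2}(p(x,x)+p(Ty,Ty))\succeq\theta$ (indeed $\preceq p(x,Ty)$), which only confirms that the required right-hand side is \emph{smaller} than the assumed bound, not larger. Compare with Corollary 4.3, where the analogous correction term $-\tfrac{1}{2}(p(y,y)+p(Tx,Tx))$ \emph{is} kept in the hypothesis; for the present corollary to follow from Theorem 3.19 by this dictionary, the hypothesis would likewise need to read $p(Tx,Ty)\preceq k[p(x,Ty)+p(y,Tx)-\tfrac{1}{2}(p(x,x)+p(Ty,Ty))]$ (or one must supply a genuinely different argument). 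As written, your proposal --- like the paper's unstated proof --- does not establish the corollary.
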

\section{Acknowledgments}
It is our great pleasure to thank the referee for his careful reading of the paper and for several helpful suggestions.
\bibliographystyle{amsplain}

\end{document}